%% file: ms.tex
\documentclass[11pt]{amsart}
\usepackage{amssymb,mathtools}
\usepackage{graphicx,microtype,booktabs,array,float}
\usepackage[section]{placeins}
\usepackage[margin=1in]{geometry}
\usepackage[hidelinks,hypertexnames=false]{hyperref}
\newtheorem{theorem}{Theorem}[section]
\newtheorem{lemma}[theorem]{Lemma}
\newtheorem{proposition}[theorem]{Proposition}

\theoremstyle{remark}
\newtheorem{remark}[theorem]{Remark}

\newcommand{\N}{\mathbb N}
\newcommand{\e}{\mathrm e}
\newcommand{\ii}{\mathrm i}
\newcommand{\norm}[1]{\lVert#1\rVert}
\newcommand{\Sig}{\Sigma_\Theta}
\newcommand{\doiurl}[1]{\href{https://doi.org/#1}{\nolinkurl{doi:#1}}}
\input{numerical_macros.tex}
\input{dimension_macros.tex}
\input{exact_asymptotic_macros.tex}
\input{space_benchmark_macros.tex}
\input{supplement_macros.tex}

\hypersetup{pdftitle={Shifted tapered-pole approximation on finite sectors: an exact asymptotic and dimension control},pdfauthor={Fei Xue and Tianqi Zhang}}

\title[Shifted tapered-pole approximation]{Shifted tapered-pole approximation
on finite sectors: an exact asymptotic and dimension control}

\author{Fei Xue}
\address{School of Mathematical and Statistical Sciences, Clemson University,
Clemson, South Carolina 29634, USA}
\email{fxue@clemson.edu}
\thanks{Fei Xue was supported in part by the National Science Foundation under
grant DMS-2608905.}

\author{Tianqi Zhang}
\address{School of Data Sciences, Zhejiang University of Finance and Economics,
18 Xueyuan Street, Hangzhou 310018, China}
\email{tianqiz@zufe.edu.cn}

\begin{document}
\raggedbottom

\begin{abstract}
We study explicit lightning-plus-polynomial (LP) approximation of $g(z)z^\alpha(\log z)^m$ on a closed unit-radius sector, where $0<\alpha<1$, $m$ is fixed, and $g$ is analytic nearby.  Since the basic sector root-exponential rate is already known, we address three finer questions.  First, a one-sided Abel--Poisson formula and a pole-free notched contour give a branchwise error bound that is uniform for grid shifts in compact subsets of $(0,1)$ and valid for every fixed logarithmic power.  Second, for $z^\alpha$ in the dense and balanced regimes, an exact vertex-scale rescaling identifies a positive finite leading constant for a specified shifted sequence.  Third, a rate-matched band of additional representation poles followed by polynomial compression gives the explicit representation dimension $N+2\sqrt{2\alpha\lambda_\sigma N}+O(1)$ while leaving the first $N$ tapered poles unchanged.  A slightly overresolved hybrid sequence also preserves the exact leading constant.  Matched-dimension experiments, including reentrant sectors, illustrate the allocation.  These exact-asymptotic and dimension statements concern the stated constructions, not sector minimax approximation or total computational complexity.
\end{abstract}

\keywords{
rational approximation, tapered poles, shifted quadrature, Poisson summation, sector, logarithmic singularity
}
\subjclass[2020]{41A20, 65D30, 30E10, 42A38}

\maketitle

\section{The approximation problem and the results}\label{sec:intro}
We study a concrete lightning-plus-polynomial construction for targets of the form
$g(z)z^\alpha(\log z)^m$ on a closed sector of unit radius, with $0<\alpha<1$, a fixed nonnegative integer $m$, and analytic $g$.  By an LP approximant we mean a finite sum of simple rational terms at prescribed poles, together with a polynomial.  The poles resolve the branch singularity and the polynomial represents the smoother remainder.  Since the basic sector root-exponential rate is already known, our questions concern the structure beneath that rate: how an interior grid shift affects a rigorous guarantee, what leading error an explicit shifted sequence actually has, and how the rational and polynomial degrees should share the available representation dimension.

There is a long approximation-theoretic background to these questions. Newman's construction for $|x|$~\cite{Newman64} and its sharpening by Vyacheslavov~\cite{Vyacheslavov74} are classical examples of root-exponential rational approximation, while Stahl's theory identifies sharp errors and pole distributions for best approximation \cite{Stahl94,Stahl03}.  Explicit linear rational approximation with preassigned poles goes back at least to Stenger~\cite{Stenger86}, and the same singular behavior appears in the analysis of corner problems; see, for example, Wasow~\cite{Wasow57}.

The more recent ``lightning'' viewpoint turns this theory into a numerical construction: choose poles near the singularity, solve only for their coefficients, and add a polynomial for the smooth remainder.  This idea has been effective for Laplace problems with corners~\cite{GT19Laplace}, conformal mapping~\cite{GT19Conformal}, potential flow~\cite{Baddoo20}, Stokes flow~\cite{BT22}, and reciprocal-log PDE solvers~\cite{NT21}. Trefethen, Nakatsukasa, and Weideman~\cite{TNW21} connected tapered pole clustering with rational approximation and quadrature, while Herremans, Huybrechs, and Trefethen~\cite{HHT23} emphasized the usefulness of the polynomial part.  Interval, sector, and corner-domain convergence laws have been developed in \cite{XYW26,XY24,XXYZ26}; multivariate analogues are studied in \cite{BHH24}.

A particularly close comparison is the concurrent work of Xiang et al.~\cite{XXYZ26}, developed independently of the present paper within the same LP framework of \cite{HHT23}. In their notation, the full-opening fraction is $\beta=2\Theta/\pi$, and the balanced parameter is $\sigma_*=2\sqrt{\pi(\pi-\Theta)/\alpha}$. On the common range $0<\alpha<1$ and $m=0,1$, \cite{XXYZ26} establishes the sector rate and the corresponding branchwise algebraic orders. We therefore do not claim those overlapping rate statements as new here.  The two works pursue different refinements of the common sector-approximation problem. The present paper develops compact-shift uniformity and all fixed logarithmic powers at the upper-bound level, a specified-sequence exact constant in the dense/balanced regime, and an explicit leading coefficient for the rational--polynomial dimension split. Conversely, \cite{XXYZ26} treats a broader exponent range and multi-corner domains, which are outside our scope. The constructions and arguments developed here are independent of those in \cite{XXYZ26}.

The reader can organize the new results around three statements.  \emph{First}, Theorem~\ref{thm:main} gives one shift-uniform proof for every fixed logarithmic power while preserving the known branchwise exponent.  \emph{Second}, Theorem~\ref{thm:exact} goes beyond an upper bound for $z^\alpha$ in the dense and balanced regimes: it identifies an explicit shifted sequence whose normalized error converges to a positive, finite constant determined by a vertex-scale profile.  \emph{Third}, Proposition~\ref{prop:dimension} addresses representation cost.  For $N$ core poles, retaining a short band of additional representation/compression poles (called transition poles below) and matching polynomial compression to the active error branch gives total dimension $N+2\sqrt{2\alpha\lambda_\sigma N}+O(1)$, where $\lambda_\sigma=\min\{1,(\sigma_*/\sigma)^2\}$.  Retaining remote poles or adding a polynomial is not itself new~\cite{HHT23}; the contribution here is the explicit threshold and dimension coefficient while the original first $N$ poles remain unchanged.  A slightly overresolved realization preserves the exact leading constant as well.  These are statements about specified constructions, not the best error over all LP spaces.

The three statements are linked by one analytic mechanism: a one-sided Abel formula followed by a local notch in the square-root variable. The shifted rule leaves the integrable endpoint singularity untouched; the notch avoids enclosing the moving poles without sacrificing the nearest-pole exponent. This construction provides an alternative to the endpoint extension and residue bookkeeping used in~\cite{XYW26,XXYZ26}, while retaining the same error orders on the common range. The classical Poisson and contour tools themselves are standard~\cite{DLMF,Stenger11,TW14,JT14}; their role here is to preserve the structure needed for the profile and dimension analysis.

Let us fix
\begin{equation}\label{eq:domain}
0<\alpha<1,\qquad 0\le\Theta<\pi,\qquad
 \Sig=\{r\e^{\ii\theta}:0\le r\le1,\ |\theta|\le\Theta\}.
\end{equation}
Here and below $\mathbb N_0=\{0,1,2,\ldots\}$.  We use the principal logarithm and set $z^\alpha(\log z)^m=0$ at $z=0$ by continuity.  Write $\norm{f}_{\Sig}=\sup_{z\in\Sig}|f(z)|$.  The shift lies in a fixed compact interval $D\Subset(0,1)$.  For $\sigma>0$ and $\delta\in D$, prescribe
\begin{equation}\label{eq:poles}
 \xi_{k,N}^{(\delta)}
=-\exp\!\left[-\sigma\left(\sqrt N-\sqrt{k-\delta}\right)\right],
 \qquad 1\le k\le N.
\end{equation}
Write
\begin{equation}\label{eq:rates}
 \sigma_* =2\sqrt{\frac{\pi(\pi-\Theta)}\alpha},\qquad
c_\Theta(\sigma)=\min\left\{\alpha\sigma,
             \frac{4\pi(\pi-\Theta)}\sigma\right\}.
\end{equation}
We call $\sigma<\sigma_*$, $\sigma=\sigma_*$, and $\sigma>\sigma_*$ the dense, balanced, and sparse branches, respectively.

\begin{theorem}[Shift-uniform LP approximation]\label{thm:main}
Let $m\in\N_0$ be fixed and let $g$ be analytic on a neighborhood of $\Sig$. For each $N$ sufficiently large and every $\delta\in D$, there is an LP function
\begin{equation}\label{eq:LP}
r_{N,\delta}(z)=\sum_{k=1}^N
      \frac{d_{k,N,\delta}}{z-\xi_{k,N}^{(\delta)}}+p_{N,\delta}(z),
 \qquad \deg p_{N,\delta}\le C_p\sqrt N,
\end{equation}
such that
\begin{equation}\label{eq:main-bound}
 \norm{g(z)z^\alpha(\log z)^m-r_{N,\delta}}_{\Sig}
 \le C\begin{cases}
 (1+\sqrt N)^m\e^{-\alpha\sigma\sqrt N},&0<\sigma\le\sigma_*,\\
 \e^{-4\pi(\pi-\Theta)\sqrt N/\sigma},&\sigma>\sigma_*.
 \end{cases}
\end{equation}
Here $C_p>0$ and $C>0$ are independent of $N$ and $\delta$, but may depend on $\alpha,\Theta,\sigma,m,g,D$.  In the second branch no uniformity as $\sigma\downarrow\sigma_*$ is asserted.  Consequently the guaranteed leading exponent is $c_\Theta(\sigma)$, uniformly over $D$, and its largest value is
\begin{equation}\label{eq:balanced}
c_\Theta(\sigma_*)=2\sqrt{\pi\alpha(\pi-\Theta)}.
\end{equation}
The same leading exponent holds with $N$ replaced by the LP dimension $L_{N,\delta}=N+n_{N,\delta}+1=N+O(\sqrt N)$, where $n_{N,\delta}:=\deg p_{N,\delta}$ is the polynomial degree.  This count includes all prescribed basis functions, even when a coefficient vanishes.
\end{theorem}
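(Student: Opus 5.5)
We first reduce to the model target. Write $g(z)z^\alpha(\log z)^m$ and note that, since $g$ is analytic near $\Sig$, the function $g(z)-g(0)$ vanishes at $0$. The product $(g(z)-g(0))z^\alpha(\log z)^m$ is handled in one of two ways. Either we expand $g$ in a short Taylor polynomial of degree $O(\sqrt N)$ plus a remainder, or, more cleanly, we carry $g$ inside the contour integral below as an analytic weight. We choose the second option. The plan is to start from a one-sided Abel/Stieltjes representation
\[
z^\alpha(\log z)^m=\frac{\partial^m}{\partial\alpha^m}\Bigl[\frac{\sin\pi\alpha}{\pi}\int_0^\infty \frac{z\,t^{\alpha-1}}{z+t}\,dt\Bigr]
\]
(on $|z|\le1$, truncated at $t=1$ with the tail $\int_1^\infty$ analytic on a neighbourhood of $\Sig$). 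After substituting $t=\e^{-\sigma(\sqrt N-\sqrt{s})}$, this becomes an integral in the square-root variable $\sqrt s$ over $s\in(0,N]$. The shifted midpoint-type rule with nodes $s=k-\delta$ reproduces exactly the poles $\xi_{k,N}^{(\delta)}$ of \eqref{eq:poles}, and the quadrature sum is the rational part of \eqref{eq:LP}. The $\alpha$-derivatives produce powers of $\log t=-\sigma(\sqrt N-\sqrt s)$, hence the factor $(1+\sqrt N)^m$.

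The error then splits into three pieces. The first is the \emph{endpoint piece} $\int_0^{t_0}$ with $t_0=\e^{-\sigma\sqrt N}$. It is bounded by $|z|\,t_0^{\alpha}|\log t_0|^m/|z|\lesssim(1+\sqrt N)^m\e^{-\alpha\sigma\sqrt N}$ uniformly on $\Sig$, because $|z+t|\ge c_\Theta\max(|z|,t)$ for $|\arg z|\le\Theta<\pi$. The second is the \emph{remote/analytic piece} (the tail $t\ge1$, the factor $g$, and the $t$-range near $1$ where the rule stops). It is analytic in a disk of radius $>1$ and is absorbed by a polynomial of degree $C_p\sqrt N$ with geometric error $\e^{-c\sqrt N}$, with $c$ as large as needed by choosing $C_p$. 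The third is the \emph{discretisation piece}. Here we apply one-sided Poisson summation / Abel--Plana in the variable $u=\sqrt s$ to the shifted rule. The error is expressed by integrals of the integrand against $\e^{2\pi\ii j(s+\delta)}$ along contours shifted to $\operatorname{Im}s=\pm h$. The shift $\delta\in D\Subset(0,1)$ keeps nodes uniformly away from the integer endpoint, so the Poisson kernels stay bounded uniformly in $\delta$. The endpoint $s=0$ singularity (integrable, $s^{-1/2}$-type after the change of variables) is left untouched because the rule is one-sided.

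The main obstacle is the contour shift. In the $s$-plane the integrand has poles where $\e^{-\sigma(\sqrt N-\sqrt s)}=-z$, i.e.\ $\sigma\sqrt s=\sigma\sqrt N+\log|z|+\ii(\pm\pi-\arg z)$. These poles move with $z$ and approach the real axis at a distance governed by $\pi-\Theta$. We would handle this with a local notch in the $\sqrt s$ variable. We deform $\sqrt s$ to $\sqrt s+\ii\eta$ with $\eta<(\pi-\Theta)/\sigma$, and indent around the nearest pole, so that no pole is enclosed and no residue bookkeeping is needed. The Poisson factor on the deformed path is $\e^{-2\pi\cdot 2\sqrt s\,\eta}$. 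Evaluated where the integrand matters ($\sqrt s\approx\sqrt N$), it yields $\e^{-4\pi(\pi-\Theta)\sqrt N/\sigma}$ up to constants. The notch costs only a bounded factor, because along the notch the distance to the pole is comparable to its radius. Balancing this piece against the endpoint piece gives $\min$ in \eqref{eq:rates}. At $\sigma\le\sigma_*$ the endpoint piece dominates, which yields the first line of \eqref{eq:main-bound}, with the log factor. For $\sigma>\sigma_*$ the second line follows, with constants degenerating as $\sigma\downarrow\sigma_*$, consistent with the disclaimer. The step that most needs care is getting a $z$-uniform, $\delta$-uniform bound on the notched integral when $z$ approaches the rays $|\arg z|=\Theta$ near $0$, where pole and endpoint scales interact. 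Once \eqref{eq:main-bound} holds, \eqref{eq:balanced} is arithmetic. The dimension statement follows from $L_{N,\delta}=N+C_p\sqrt N+1$ and $\sqrt{L}=\sqrt N+O(1)$, so the exponent changes only by a bounded factor.
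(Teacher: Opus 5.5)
Several of your ingredients are the paper's: the Stieltjes formula differentiated in $\alpha$ at fixed $t$, the same shifted nodes, one-sided Poisson summation in the square-root variable, a notch, and the left-endpoint bound $(1+\sqrt N)^m\e^{-\alpha\sigma\sqrt N}$. The decomposition, however, has a genuine gap at the right end.

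You stop the shifted rule at $s=N$ (i.e.\ $t=1$) and treat $\int_1^\infty$ separately. A shifted rule on the finite interval $(0,N]$ has an Euler--Maclaurin endpoint error at $s=N$ that is only algebraically small. For $m=0$ its leading term is $(\tfrac12-\delta)f(N;z)$ with $f(N;z)=\frac{\sin\pi\alpha}{\pi}\frac{\sigma}{2\sqrt N}\frac{z}{z+1}$, and even at $\delta=\tfrac12$ the next term is $O(N^{-1})$. You assign this error to a ``remote/analytic piece'' that is analytic in a disk of radius $>1$, but that is false. The finite-rule error has singularities on $[-1,-t_0]$, inside the unit disk; the tail $\int_1^\infty$ has a branch point at $-1$; and $g$ is only analytic near $\Sig$. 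Nothing in the proposal isolates the endpoint term as a function analytic and uniformly bounded on a fixed neighbourhood of $\Sig$ with an exponentially small remainder. This is delicate because $t=1$ corresponds to $|z|=1$, which is exactly where the sparse-branch error is generated: the near-pole bound $\e^{-T+(1-q)X}$ is largest at $X=T$ when $q<1$.

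The paper never creates this endpoint. Because $\alpha<1$, the integrand decays like $\e^{-(w-T)/\kappa}$ beyond $t=1$, so the one-sided rule runs over all $u\in(0,\infty)$. Nodes $k>M$ are dropped at cost $O((1+T)^m\e^{-T})$, and the poles $N<k\le M$ are compressed by Taylor expansion about $z=1$. That centre is essential. These poles lie left of $-1$, but $a_{N+1}\downarrow1$, so no fixed disk $|z|<\rho$ with $\rho>1$ works uniformly in $N$. By contrast, $|z-1|\le R_\Theta<2<1+a_k$ on $\Sig$, and \eqref{eq:exact-remainder} gives the $N$-uniform ratio $(R_\Theta/2)^{n+1}$.

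The contour step, as you describe it, also does not give the stated prefactors. Work in the variable $w=\alpha\sigma\sqrt s$ of Theorem~\ref{thm:quad}. The Poisson factor must be evaluated at the pole line $\Re w=X=T+\alpha\log|z|$, not at $\sqrt s\approx\sqrt N$. A flat path at fixed height $\eta<(\pi-\Theta)/\sigma$ loses the exponent. A flat path at distance $\epsilon$ above the nearest pole needs $\epsilon=O(X^{-1})$ to keep the kernel sharp, and then its denominator is $\asymp|x-X|+\epsilon$. The flat stretch within unit distance of the pole therefore yields a bound of order $\e^{-T+(1-q)X}\log X$. This is harmless for $q>1$, but it is a $\log N$ loss at $|z|\approx1$ in the balanced and sparse branches, and an indentation of radius comparable to the gap does not remove it.

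The missing idea is that all poles lie on the single vertical line $\Re w=X$. Hence the horizontal pieces may run \emph{deeper} than the nearest pole, at height $B=a+1-X^{-1}$ with $a=a_-(\theta)$. Only a V-shaped notch $y=a-X^{-1}+|x-X|$ passes above the pole. On the notch, with $v=x-X$, the bound $\e^{-bxy}\le C\e^{-baX}\e^{-bX|v|/2}$ localizes to width $X^{-1}$ and exactly offsets the $1/(|v|+X^{-1})$ denominator.

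You also need two further pieces:
\begin{itemize}
\item A separate real-axis bound for $X\le X_*$. This is the vertex scale you flag as delicate; there $|\Psi_T|\le C(1+T+w)^m\e^{-T-w/\kappa}$ and no contour is used.
\item Abel damping of the modes before deformation, since the $u^{-1/2}$ endpoint makes the mode series non-absolutely summable.
\end{itemize}

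Finally, $g$ cannot simply be carried inside the integral. The term $g(z)b_kz/(z+a_k)$ is not of LP form, and rewriting it requires $g(-a_k)$ with $-a_k\in(-1,0)$, possibly outside the domain of $g$. The paper instead uses a Bernstein--Walsh polynomial $q_N$ on $\Sig$ with error $\e^{-\gamma\sqrt N}$, $\gamma>c_\Theta(\sigma)$. Then \eqref{eq:g-closure} shows that $q_Nr^0_{N,\delta}$ keeps the same poles with polynomial degree $O(\sqrt N)$.
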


For $m=0$ the prefactor in both branches is a constant; for $m=1$ it is $O(\sqrt N)$ only in the first branch, matching the algebraic orders of the endpoint construction in~\cite[Theorems 1.1--1.2]{XXYZ26} over the common range $0<\alpha<1$.  Theorem~\ref{thm:main} additionally gives uniformity over compact interior-shift families and covers every fixed $m$ through a one-sided proof that leaves the integrable endpoint singularity untouched.  The $O(\sqrt N)$ notation in Theorem~\ref{thm:main} also hides a practical question: how should rational and polynomial degrees be divided? Proposition~\ref{prop:dimension} answers this for the explicit singular prototypes by retaining a short transition-pole band and matching its compression accuracy to the active branch.  With $\lambda_\sigma=\min\{1,(\sigma_*/\sigma)^2\}$, the optimized split has total dimension $N+2\sqrt{2\alpha\lambda_\sigma N}+O(1)$, with the original first $N$ tapered poles untouched.  In particular, the balanced and dense branches have $\lambda_\sigma=1$, whereas the leading correction decreases by the factor $\sigma_*/\sigma$ in the sparse branch.

The upper bound can be sharpened further for the explicit algebraic prototype. The point is not to obtain a minimax lower bound, but to identify the actual leading error of this particular shifted construction.  Set $c_\alpha=\sin(\pi\alpha)/(\pi\alpha)$ and, for fixed $h>0$ and $0<\delta<1$, put $w_k=\sqrt{(k-\delta)h}$.  For $X\in\mathbb R$ and $|\theta|\le\Theta$, define the vertex-scale profile
\begin{equation}\label{eq:profile}
 \mathcal H_{h,\delta}(X,\theta)
=\e^{X+\ii\alpha\theta} -\frac{c_\alpha h}{2}\sum_{k=1}^\infty
 \frac{\e^{w_k+\ii\theta}}
{w_k\bigl(\e^{(w_k-X)/\alpha}+\e^{\ii\theta}\bigr)} .
\end{equation}
The fixed dependence on $\alpha$ is suppressed in $\mathcal H_{h,\delta}$ and in the constant below.

\begin{theorem}[Exact asymptotic for the explicit dense/balanced construction]
\label{thm:exact}Fix $0<\delta<1$ and $0<\sigma\le\sigma_*$, and set $h=\alpha^2\sigma^2$. The series in \eqref{eq:profile} is locally uniformly convergent on $\mathbb R\times[-\Theta,\Theta]$, and
\begin{equation}\label{eq:profile-constant}
0<C_{h,\delta,\Theta}:=
 \sup_{X\in\mathbb R,\ |\theta|\le\Theta}
|\mathcal H_{h,\delta}(X,\theta)|<\infty.
\end{equation}
There is an explicit sequence of LP functions $\widetilde r_{N,\delta}$ with exactly the poles \eqref{eq:poles} and polynomial degree $O(\sqrt N)$ such that
\begin{equation}\label{eq:exact-asymptotic}
 \norm{z^\alpha-\widetilde r_{N,\delta}}_{\Sig}
=\bigl(C_{h,\delta,\Theta}+o(1)\bigr)
   \e^{-\alpha\sigma\sqrt N}.
\end{equation}
Moreover, the same asymptotic is attained by an explicit hybrid sequence $\widetilde r^{\rm hyb}_{N,\delta}$ that keeps these $N$ core poles, adds only transition poles to the left of $-1$, and has total LP dimension
\begin{equation}\label{eq:exact-dimension}
L_N=N+2\sqrt{2\alpha N}+O(\log N).
\end{equation}
At $\sigma=\sigma_*$ both constructions attain the exact leading exponential rate $2\sqrt{\pi\alpha(\pi-\Theta)}$.
\end{theorem}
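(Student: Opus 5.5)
The construction starts from the Stieltjes representation
\[
z^\alpha=c_\alpha\int_0^\infty \frac{z\,t^{\alpha-1}}{t+z}\,dt ,
\]
which holds on $\Sig\setminus\{0\}$. Here $c_\alpha=\sin(\pi\alpha)/(\pi\alpha)$ appears after writing $\frac{\sin\pi\alpha}{\pi}\int_0^\infty \frac{z\,t^{\alpha-1}}{t+z}dt$ in the normalization of \eqref{eq:profile}.

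\textbf{Substitution and discretization.} Substitute $t=\e^{-(\sqrt{N'}-w/\sqrt h\cdot)\dots}$, or more cleanly $t=\exp[-\sigma(\sqrt N-s)]$ with $s=\sqrt{u}$. This puts the nodes $s_k=\sqrt{k-\delta}$ exactly at the poles \eqref{eq:poles}. Apply the shifted midpoint-type rule in the variable $u=s^2$, with step $1$ and shift $\delta$, to the portion $u\in(0,N)$ of the integral. The quadrature weights then give explicit residues $d_{k,N,\delta}$. The remainder consists of two pieces: the tail $t\in(1,\infty)$, i.e.\ $u>N$, and the quadrature defect. The tail is analytic on a neighborhood of $\Sig$, since $t+z\ne0$ for $t\ge1$ and $z$ with $|z|\le 1$ except near $z=-1\notin\Sig$. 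Its part from $t\ge \e^{\epsilon\sqrt N}$ is replaced by a polynomial of degree $O(\sqrt N)$ with error $o(\e^{-\alpha\sigma\sqrt N})$; the remaining band is where the hybrid construction's transition poles enter. This defines $\widetilde r_{N,\delta}$.

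\textbf{Vertex-scale identification of the error.} Write $z=\e^{-(\sqrt N\,\alpha\sigma - X)/\alpha}\e^{\ii\theta}$, i.e.\ $|z|^\alpha=\e^{X-\alpha\sigma\sqrt N}$. Multiplying the error by $\e^{\alpha\sigma\sqrt N}$ and changing variables $w=\alpha\sigma(\sqrt N-\sigma^{-1}\log(1/t))$ turns the error into $\e^{X+\ii\alpha\theta}$ minus the quadrature sum. With $h=\alpha^2\sigma^2$ one has $w_k=\sqrt{(k-\delta)h}$, and the shifted poles $-\e^{(w_k-\dots)/\alpha}$ reduce the sum exactly to the series in \eqref{eq:profile}, up to corrections from three sources: truncation at $k\le N$, curvature of the map $k\mapsto\sqrt N-\sqrt{k-\delta}$ away from the vertex scale, and the polynomial remainder. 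Local uniform convergence of \eqref{eq:profile} follows from the decay of its terms, which are $O(\e^{-(1/\alpha-1)w_k}/w_k)$ for large $k$ because $1/\alpha>1$. Finiteness of the supremum follows by comparing the sum with the integral $c_\alpha\int \e^{w}\,dw/(\e^{(w-X)/\alpha}+\e^{\ii\theta})$, which equals $\e^{X+\ii\alpha\theta}$. For this comparison I would use the one-sided Abel--Poisson estimate from the proof of Theorem~\ref{thm:main}, uniformly in $X$. As $X\to+\infty$ the profile tends to $0$ by the Poisson bound; in the balanced case this is the regime where the sparse-branch term $\e^{-4\pi(\pi-\Theta)\sqrt N/\sigma}$ competes, and at $\sigma=\sigma_*$ it is of the same order. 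As $X\to-\infty$ both terms vanish. Positivity holds because $\mathcal H_{h,\delta}\not\equiv0$: the sum is meromorphic in $X$ with poles at $X=w_k-\ii\alpha(\theta\pm\pi)$, while $\e^{X+\ii\alpha\theta}$ is entire.

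\textbf{Uniform convergence of the normalized error.} It remains to show that the normalized error converges to $\mathcal H_{h,\delta}$ uniformly in $(X,\theta)$ on the region where the supremum is nearly attained, and is $o(1)$ elsewhere. I expect this to be the main obstacle. Near the vertex, $X=O(1)$, I would Taylor expand $\sqrt{k-\delta}$ about large $k$ to get convergence with error $O(N^{-1/2})$. For $X\gg1$, i.e.\ $z$ macroscopic, I would bound the error directly by Theorem~\ref{thm:main}'s contour argument. There the dense branch gives decay relative to $\e^{-\alpha\sigma\sqrt N}$. In the balanced case the competing term is only of the same order, so I must verify that it is also captured by the profile; the argument is that the Poisson aliasing terms of \eqref{eq:profile} reproduce it as $X$ grows. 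The hybrid sequence uses Proposition~\ref{prop:dimension} with $\lambda_\sigma=1$ to replace the tail by transition poles left of $-1$ plus a compressed polynomial. Slight overresolution, with a $O(\log N)$ extra degree, makes its error $o(\e^{-\alpha\sigma\sqrt N})$, which preserves the constant and gives \eqref{eq:exact-dimension}. The last assertion then follows by substituting $\sigma=\sigma_*$ into \eqref{eq:balanced}.
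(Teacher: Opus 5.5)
Your proposal has two genuine gaps: one in the construction, and one in how you pass from the profile to the constant.

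\emph{The construction.} You apply the shifted rule only on $u\in(0,N)$, i.e.\ $t\in(\e^{-\sigma\sqrt N},1)$, and replace the \emph{exact} tail integral over $t>1$ by a polynomial. This leaves a finite-endpoint quadrature defect at $t=1$. In the paper's variables the nodes are $(k-\delta)h$ and the right endpoint is $u=T^2=Nh$. Euler--Maclaurin for the shifted rule then gives the defect as $(\tfrac12-\delta)\,hF_T(T^2;z)$ plus higher endpoint terms, with $F_T(T^2;z)=c_\alpha z/(2T(1+z))$. It is therefore of algebraic size in $N$ at macroscopic $z$ (order $N^{-1/2}$, and still algebraic at $\delta=\tfrac12$), not $o(\e^{-\alpha\sigma\sqrt N})$. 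The defect is analytic on $\Sig$, but nothing in your approximant removes it. It is exactly what your ``truncation at $k\le N$'' correction would have to make negligible.

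The paper never creates a right endpoint. It samples the whole half-line, obtaining the infinite rule $\mathcal Q_{T,h,\delta}(z)=\sum_{k\ge1}b_kz/(z+a_k)$. It truncates this at $M_T$, where the discarded tail is $O(\e^{-T}(1+T)^{-2})$. It then compresses the \emph{discrete} sum $\sum_{N<k\le M_T}b_kz/(z+a_k)$, not the tail integral, into a polynomial. For the core-only sequence this is a Taylor expansion at $z=1$ with the exact remainder. For the hybrid it is retention through $K_{A_*}$ plus expansion at $0$, as in Proposition~\ref{prop:dimension}, which you do invoke there. Both finite-realization errors are then $O(\e^{-T}(1+T)^{-2})$.

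\emph{The constant.} There are no ``curvature'' corrections to control. With $z=\exp((X-T)/\alpha+\ii\theta)$, the identity $\e^T\bigl(z^\alpha-\mathcal Q_{T,h,\delta}(z)\bigr)=\mathcal H_{h,\delta}(X,\theta)$ holds \emph{exactly} for every $X\le T$. The square roots are already built into $w_k=\sqrt{(k-\delta)h}$. Moreover, for $X=O(1)$ the relevant $k$ are bounded, so expanding $\sqrt{k-\delta}$ about large $k$ is beside the point.

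Hence $\e^T\norm{z^\alpha-\mathcal Q_{T,h,\delta}}_{\Sig}=\sup_{X\le T,\,|\theta|\le\Theta}|\mathcal H_{h,\delta}(X,\theta)|$, which increases monotonically to $C_{h,\delta,\Theta}$. Finiteness follows from Theorem~\ref{thm:quad} with $m=0$ and the left-tail bound, by taking $T\ge X$. No uniform convergence and no localization of a near-maximizer are needed.

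Your localization plan would in fact fail, because the macroscopic region is not $o(1)$ after normalization. The left-truncation component of $\mathcal H_{h,\delta}$ tends to $c_\alpha$ as $X\to+\infty$, so the profile need not tend to $0$ there. The paper assumes no such decay, and the supremum need not be attained at finite $X$. Theorem~\ref{thm:main} likewise gives only $O(\e^{-T})$ at macroscopic $z$, with no relative decay even in the dense branch.

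Your positivity argument via the poles of the series in complex $X$ is sound, and is a valid substitute for the paper's $X\to-\infty$ asymptotics. The $O(\log N)$ overresolution for the hybrid matches the paper.
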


For the hybrid sequence in Theorem~\ref{thm:exact}, the exact constant depends on the convention used to measure degree.  Equation~\eqref{eq:exact-dimension} gives
\[
 \sqrt{L_N}-\sqrt N\longrightarrow\sqrt{2\alpha},
 \qquad
 \e^{\alpha\sigma\sqrt{L_N}}\norm{z^\alpha-\widetilde r^{\rm hyb}_{N,\delta}}_{\Sig}
 \longrightarrow C_{h,\delta,\Theta}\e^{\alpha\sigma\sqrt{2\alpha}}.
\]
Thus replacing the core-pole count $N$ by the total LP dimension preserves the root-exponential exponent but not the normalized leading constant.  We state \eqref{eq:exact-asymptotic} in terms of $N$ and report total dimensions separately whenever constants are compared.

The profile has a simple geometric meaning.  Set $T=\alpha\sigma\sqrt N$.  Writing $z=\exp((X-T)/\alpha+\ii\theta)$ keeps $X$ fixed while the vertex scale $|z|\asymp \e^{-T/\alpha}$ shrinks.  On that scale, multiplication of the \emph{full} error of the infinite shifted rule by $\e^T$ removes every occurrence of $T$.  In particular, the earlier left-truncation error is not discarded or treated as lower order: it becomes one $T$-independent component of the profile.  Thus \eqref{eq:profile} is an exact rescaling identity, not a formal local expansion.  The finite LP in Theorem~\ref{thm:exact} only pushes the later right-tail and compression errors to $o(\e^{-T})$.  The core-only realization keeps the first $N$ prescribed poles; the hybrid realization also retains a short, explicitly counted transition band to avoid an unnecessarily large polynomial part on wide sectors.

\begin{remark}[What the theorem does not assert]\label{rem:scope}
The balanced spacing maximizes a proved upper-bound exponent, but we prove neither a matching sector minimax lower bound nor an exact asymptotic for the best approximant in each shifted LP space.  Theorem~\ref{thm:exact} instead concerns one explicit dense/balanced sequence.  For $\Theta=0$, $g=1$, and $m=0$, $c_0(\sigma_*)=2\pi\sqrt\alpha$ agrees with Stahl's minimax exponent~\cite{Stahl03}.  The interval, the two-ray set, and every radial set with angles in $[-\Theta,\Theta]$ follow by restriction from the same sector estimate.  An analytic $g$ creates no new singularity, though a zero at the origin can weaken the leading one; the stated upper bound still applies.
\end{remark}

Before entering the proof, it helps to keep the construction in four pieces.  We first truncate the integral on the left, losing $O((1+T)^m\e^{-T})$.  We then apply the shifted quadrature rule; this is the step that creates the angle-dependent exponential rate.  Finally, to obtain a finite LP representation, we truncate the far-right quadrature tail and compress sufficiently remote poles into a polynomial.  The proof is arranged in exactly this order so that one can see which estimate is responsible for which part of the final error.

The rest of the paper is organized as follows. Section~\ref{sec:quadrature} proves the shifted quadrature estimate once and for all.  Section~\ref{sec:bridge} turns the infinite quadrature into a finite LP without changing its leading exponent and then proves Theorem~\ref{thm:main}.  The exact-asymptotic argument revisits the same construction but makes the final truncation and compression errors smaller than the natural $\e^{-T}$ vertex scale.  Section~\ref{sec:numerics} tests the resulting predictions while keeping numerical evidence separate from minimax or stability claims, and conclusions are given in Section~\ref{sec:conclusions}. 

\section{One shifted quadrature estimate}\label{sec:quadrature}

This section contains the only genuinely delicate quadrature estimate in the paper.  The strategy is simple to state.  We leave the integrable $u^{-1/2}$ endpoint factor alone when applying the one-sided Poisson formula. Inside each Fourier mode we then put $u=w^2$; the square-root singularity disappears, and all moving poles line up on one vertical line in the $w$-plane.  The contour can then be pushed deep into the half-plane except for a short notch around that pole line.  Everything needed later is summarized in Theorem~\ref{thm:quad}; readers mainly interested in the final LP construction may treat it as the analytic input for Section~\ref{sec:bridge}. Its loss-free $m=0$ bound will also give the global profile bound needed for Theorem~\ref{thm:exact}.

\subsection{The integral and its fixed polynomial weight}
We begin with an integral representation in which the logarithmic power is carried by a fixed polynomial weight.  This keeps the later quadrature argument identical for every fixed $m$.

Define
\begin{equation}\label{eq:Lambda}
A_j(\alpha)=\frac{d^j}{d\alpha^j}\frac{\sin(\pi\alpha)}\pi,
 \qquad
 \Lambda_{m,\alpha}(s)=\sum_{j=0}^m\binom mj
       \frac{A_{m-j}(\alpha)}{\alpha^{j+1}}s^j.
\end{equation}
In particular, $\Lambda_{0,\alpha}=\sin(\pi\alpha)/(\pi\alpha)$ and $\Lambda_{1,\alpha}(s)=\cos(\pi\alpha)/\alpha+ s\sin(\pi\alpha)/(\pi\alpha^2)$.  Differentiating the Stieltjes formula at fixed integration variable, and only then setting $t=\e^{s/\alpha}$, gives
\begin{equation}\label{eq:integral}
z^\alpha(\log z)^m =\int_{-\infty}^\infty
    \Lambda_{m,\alpha}(s)\frac{z\e^s}{\e^{s/\alpha}+z}\,ds.
\end{equation}
For completeness, differentiation of $z^\alpha=(\sin(\pi\alpha)/\pi) \int_0^\infty zt^{\alpha-1}/(t+z)\,dt$ is justified on compact subintervals $[\alpha_-,\alpha_+]\Subset(0,1)$ by the integrable majorant $C(t^{\alpha_- -1}+t^{\alpha_+ -1})(1+|\log t|)^m/(1+t)$, locally uniformly in the slit $z$-plane.  This yields~\eqref{eq:Lambda} after the substitution.  In particular, it does not mean differentiating an $\alpha$-dependent set of rational poles.

Put $\kappa=\alpha/(1-\alpha)$ and $d_\Theta=\cos(\Theta/2)>0$. The elementary inequality
\begin{equation}\label{eq:sector-separation}
|a+r\e^{\ii\theta}|\ge d_\Theta(a+r),
 \qquad a,r\ge0,\quad |\theta|\le\Theta,
\end{equation}
follows by squaring both sides.  It gives
\begin{equation}\label{eq:real-envelope}
 \left|\frac{z\e^s}{\e^{s/\alpha}+z}\right|
 \le d_\Theta^{-1}\min\{\e^s,\ |z|\e^{-s/\kappa}\}.
\end{equation}
Thus~\eqref{eq:integral} extends continuously to $z=0$, and its left tail below $-T$ is at most $C(1+T)^m\e^{-T}$, uniformly on $\Sig$.

It is convenient to prove the quadrature estimate for any fixed polynomial $P$ of degree at most $m$, subsequently taking $P=\Lambda_{m,\alpha}$. For $T>0$ define
\begin{equation}\label{eq:Phi}
 \begin{aligned}
 \Phi_T(w;z)&=\frac{z\e^{w-T}}{\e^{(w-T)/\alpha}+z},
 &\Psi_T(w;z)&=P(w-T)\Phi_T(w;z),\\
 F_T(u;z)&=\frac{\Psi_T(\sqrt u;z)}{2\sqrt u},&u>0.
 \end{aligned}
\end{equation}
The omitted left tail is the only difference between~\eqref{eq:integral} and $\int_0^\infty F_T(u;z)\,du$ when $P=\Lambda_{m,\alpha}$. The transformed square-root endpoint is integrable but in general not in $L^2$.  No replacement of $F_T$ at that endpoint will be made.

\subsection{An Abel identity and the shift kernel}
The shift enters only through a one-sided Poisson formula.  We state it in a form that applies directly to the half-line integrand and does not require an artificial value at the endpoint.

The point of the Abel regularization is not cosmetic.  Because the transformed integrand is only $O(u^{-1/2})$ at the endpoint, its Fourier series need not be absolutely summable, so a direct interchange of the mode sum with the later contour deformation would be unjustified.  We therefore damp the Fourier modes first, perform every deformation at fixed damping parameter, and remove the damping only after uniform contour bounds are available.  The interior shift is essential here: the sampling point $(1-\delta)h$ stays away from the endpoint of the zero extension when $\delta$ ranges in a compact subset of $(0,1)$.

For a half-line function $f$ write
\begin{equation}\label{eq:Qhat}
Q_{h,\delta}(f)=h\sum_{k=1}^\infty f((k-\delta)h),\qquad
 \widehat f(\xi)=\int_0^\infty f(u)\e^{-2\pi\ii\xi u}\,du.
\end{equation}
\begin{lemma}[One-sided shifted Poisson formula]\label{lem:Abel}
Let $f\in L^1(0,\infty)$, $h>0$, and $0<\delta<1$.  Assume that $Q_{h,\delta}(f)$ converges absolutely and that the periodization of the zero extension of $f$ has a representative continuous at $(1-\delta)h$, with value equal to the sampled series there.  Then
\begin{equation}\label{eq:Abel}
Q_{h,\delta}(f)-\int_0^\infty f(u)\,du =\lim_{\rho\uparrow1}\sum_{n\ne0}
    \rho^{|n|}\e^{-2\pi\ii n\delta}\widehat f(n/h).
\end{equation}
\end{lemma}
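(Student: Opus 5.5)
The plan is to periodize and use Abel summability of Fourier series at a point of continuity. Let $\tilde f$ denote the zero extension of $f$ to $\mathbb R$, and define the $h$-periodization
\[
 F(x)=\sum_{k\in\mathbb Z}\tilde f(x+kh),
\]
which converges in $L^1(0,h)$ because $f\in L^1(0,\infty)$. Its Fourier coefficients are
\[
 c_n=\frac1h\int_0^h F(x)\e^{-2\pi\ii nx/h}\,dx
 =\frac1h\int_0^\infty f(u)\e^{-2\pi\ii nu/h}\,du
 =\frac1h\widehat f(n/h),
\]
by unfolding the sum with Fubini; the justification is that $\sum_k\int_0^h|\tilde f(x+kh)|\,dx=\norm{f}_{L^1}<\infty$. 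In particular $c_0=h^{-1}\int_0^\infty f$.

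Next, evaluate at the sampling point $x_0=(1-\delta)h$. Since $x_0+kh=(k+1-\delta)h$, the only nonzero terms are those with $k+1\ge1$, so
\[
 F(x_0)=\sum_{k\ge1}f((k-\delta)h)=h^{-1}Q_{h,\delta}(f).
\]
Here we use the hypothesis that the periodization has a representative continuous at $x_0$ whose value there is the absolutely convergent sampled series. This identification requires care, because $F$ is only defined almost everywhere; the hypothesis fixes the value pointwise. Also $\e^{2\pi\ii nx_0/h}=\e^{-2\pi\ii n\delta}$.

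The core step is the classical theorem on the Abel--Poisson means of the Fourier series of an $L^1$ periodic function. Write
\[
 P_\rho*F(x)=\sum_{n}\rho^{|n|}c_n\e^{2\pi\ii nx/h},
\]
where the series converges absolutely for $\rho<1$ because $|c_n|\le\norm{F}_{L^1}/h$. Then $P_\rho*F(x_0)\to F(x_0)$ as $\rho\uparrow1$ at every point where $F$ is continuous. I would give the short proof directly. The Poisson kernel $P_\rho$ is positive, has unit mass, and satisfies $\sup_{\eta\le|t|\le h/2}P_\rho(t)\to0$. Split the convolution integral into the region $|t|<\eta$, which is small by continuity at $x_0$, and the region $|t|\ge\eta$, which is bounded by $\sup P_\rho\cdot(\norm F_{L^1}+h|F(x_0)|)\to0$.

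Combining these pieces gives
\[
 h^{-1}Q_{h,\delta}(f)=h^{-1}\int_0^\infty f+\lim_{\rho\uparrow1}\sum_{n\ne0}\rho^{|n|}h^{-1}\widehat f(n/h)\e^{-2\pi\ii n\delta}.
\]
The $n=0$ term does not depend on $\rho$ and can be moved out of the limit. Multiplying through by $h$ yields \eqref{eq:Abel}.

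I expect the main obstacle to be only the bookkeeping of the pointwise hypothesis, not any analysis. Continuity is needed only locally at $x_0$, and the interior shift ensures that $x_0$ is not the point $0\equiv h$ where $\tilde f$ jumps. No absolute summability of the $\widehat f(n/h)$ is needed, which is exactly why the Abel damping is used.
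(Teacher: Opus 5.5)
Your proposal is correct and follows essentially the same route as the paper: periodize the zero extension, identify $h^{-1}\widehat f(n/h)$ as its Fourier coefficients by Tonelli/Fubini unfolding, write the Abel-damped series as a Poisson-kernel convolution evaluated at $(1-\delta)h$, pass to the limit using the kernel's concentration, continuity at that point, and integrability of the periodization, and then subtract the $n=0$ term. The differences are only cosmetic: you normalize the kernel to unit mass per period and write out the standard $\eta$-splitting, whereas the paper uses a kernel of mass $h$ and simply states the concentration property.
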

\begin{proof}
Let $f_0$ be the zero extension and $\mathcal P_hf(t)=\sum_{k\in\mathbb Z}f_0(t+kh)$. Tonelli's theorem gives $\int_0^h\sum_k|f_0(t+kh)|\,dt=\norm{f}_1$. For $\rho<1$, the bound $|\widehat f|\le\norm f_1$ permits summation under the integral.  With the circular Poisson kernel $P_\rho(v)=(1-\rho^2)/(1-2\rho\cos v+\rho^2)$, the sum including $n=0$ is
\[
 \int_0^h\mathcal P_h f(t)
P_\rho\!\left(\frac{2\pi}{h}(t-(1-\delta)h)\right)dt.
\]
The kernel has mass $h$ and tends uniformly to zero off any neighborhood of $(1-\delta)h$.  Continuity at that point and integrability of the periodization therefore give the limit $Q_{h,\delta}(f)$. Subtract the $n=0$ term.
\end{proof}

For~\eqref{eq:Phi} the lemma's assumptions can be checked directly.  The endpoint estimate $F_T(u;z)=O(u^{-1/2})$ gives integrability at $0$, while the large-$u$ tail is stretched-exponentially summable.  Hence the periodized tail converges locally uniformly away from the lattice endpoints.  Since $(1-\delta)h$ remains in a compact subinterval of $(0,h)$ for $\delta\in D$, the periodization is continuous at the sampled point and its value there is exactly the absolutely convergent node sum required in Lemma~\ref{lem:Abel}.  Substitution $u=w^2$ in each Fourier transform then removes the singular factor:
\begin{equation}\label{eq:Fourier-w}
 \widehat F_T(n/h;z)=\int_0^\infty
       \Psi_T(w;z)\e^{-2\pi\ii n w^2/h}\,dw.
\end{equation}
The positive modes will be deformed into the lower half-plane and the negative modes into the upper half-plane.  Their geometric kernels are
\begin{equation}\label{eq:kernels}
 \begin{aligned}
 K_{h,\delta}^-(w)&=\frac{q_-(w)}{1-q_-(w)},&
 q_-(w)&=\e^{-2\pi\ii(\delta+w^2/h)},\\
 K_{h,\delta}^+(w)&=\frac{q_+(w)}{1-q_+(w)},&
 q_+(w)&=\e^{ 2\pi\ii(\delta+w^2/h)}.
 \end{aligned}
\end{equation}
For $w=x-\ii y$, $x,y>0$, and $b=4\pi/h$,
\begin{equation}\label{eq:kernel-envelope}
|K_{h,\delta}^-(w)|\le\frac{\e^{-bxy}}{1-\e^{-bxy}}.
\end{equation}
The sign-reversed statement holds in the upper half-plane.

\begin{proposition}[A kernel (not error) optimum]\label{prop:shift}
For every $0<\delta<1$,
\begin{equation}\label{eq:Adelta}
 \sup_{t\ge0}|K_{h,\delta}^-(t(1-\ii))|
=\sup_{t\ge0}|K_{h,\delta}^+(t(1+\ii))| =A_\delta:=\frac1{2\sin(\pi\delta)}.
\end{equation}
The midpoint $\delta=1/2$ uniquely minimizes this quantity.
\end{proposition}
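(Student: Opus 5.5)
On the diagonal rays the phase $w^2/h$ is real, which makes the whole computation elementary. I will compute $|K^-_{h,\delta}(t(1-\ii))|$ in closed form as a function of $t$, take the supremum, and then use a conjugation symmetry to transfer the result to $K^+$.

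\emph{The $K^-$ kernel.} For $w=t(1-\ii)$ we have $w^2=-2\ii t^2$. Hence
\[
q_-(w)=\e^{-2\pi\ii\delta}\e^{-4\pi t^2/h}=\e^{-2\pi\ii\delta}\e^{-s},\qquad s:=bt^2\ge0,\quad b=4\pi/h.
\]
With $q=\e^{-2\pi\ii\delta}\e^{-s}$ we get $|K^-|=|q|/|1-q|=1/|\e^{s}\e^{2\pi\ii\delta}-1|$. Writing $R=\e^s\ge1$,
\[
|R\e^{2\pi\ii\delta}-1|^2=R^2-2R\cos(2\pi\delta)+1 .
\]
So the supremum of $|K^-|$ equals $1/\min_{R\ge1}\sqrt{R^2-2R\cos(2\pi\delta)+1}$.

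\emph{Locating the minimum.} The quadratic $R^2-2R\cos(2\pi\delta)+1$ has its vertex at $R=\cos(2\pi\delta)<1$. It is therefore increasing on $[1,\infty)$, and its minimum there is attained at $R=1$, i.e.\ $t=0$. The minimal value is
\[
2-2\cos(2\pi\delta)=4\sin^2(\pi\delta).
\]
Since $\sin(\pi\delta)>0$ for $0<\delta<1$, this gives
\[
\sup_{t\ge0}|K^-_{h,\delta}(t(1-\ii))|=\frac{1}{2\sin(\pi\delta)}=A_\delta,
\]
and the supremum is in fact a maximum, attained at $t=0$.

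\emph{The $K^+$ kernel.} For real $\delta$ and $h$ we have $q_+(\bar w)=\overline{q_-(w)}$, and hence $K^+_{h,\delta}(\bar w)=\overline{K^-_{h,\delta}(w)}$. Taking $w=t(1-\ii)$, so that $\bar w=t(1+\ii)$, the moduli of the two kernels agree pointwise along the two rays. The two suprema are therefore equal. One can also check this directly: $w^2=2\ii t^2$ on the upper ray, so $q_+=\e^{2\pi\ii\delta}\e^{-s}$, and the same computation applies.

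\emph{Uniqueness of the minimizer.} On $(0,1)$ the function $\sin(\pi\delta)$ attains its maximum $1$ only at $\delta=1/2$. Hence $A_\delta\ge 1/2$, with equality exactly at $\delta=1/2$, which gives the unique minimization.

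There is no serious obstacle. The only point needing care is the observation that the vertex $\cos(2\pi\delta)$ lies below $1$, so the supremum sits at the ray's origin $t=0$ and not at an interior point. This is consistent with the title's caveat that this is a kernel optimum rather than an error optimum: it controls $K^\pm$ near $w=0$ only, not the full quadrature error.
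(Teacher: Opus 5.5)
Your proposal is correct and is essentially the paper's argument: on the lower diagonal $q_-=\e^{-2\pi\ii\delta}\e^{-bt^2}$, the kernel modulus is monotone in $t$, so the supremum is attained at $t=0$ with value $1/(2\sin(\pi\delta))$, and $K^+$ is handled by conjugation. The only difference is cosmetic: you use the reciprocal variable $R=\e^{bt^2}$ and locate the vertex of a quadratic, whereas the paper differentiates $v^2/(1+v^2-2v\cos(2\pi\delta))$ in $v=\e^{-bt^2}$; the paper also records the same bound for the Abel-damped kernel $v\mapsto\rho v$, which your argument gives just as easily via $R\mapsto R/\rho$.
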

\begin{proof}
On the lower diagonal, $q_- =v\e^{-2\pi\ii\delta}$ with $v=\e^{-bt^2}\in(0,1]$.  The derivative of $v^2/(1+v^2-2v\cos(2\pi\delta))$ has the sign of $1-v\cos(2\pi\delta)\ge0$.  Its maximum occurs at $v=1$ and equals $[2\sin(\pi\delta)]^{-2}$.  The upper kernel is its conjugate. The same bound holds with $v$ replaced by $\rho v$, $0<\rho<1$.
\end{proof}

The phase gap in~\eqref{eq:Adelta} explains why merely omitting the sample at $u=0$ is not the same as shifting inside the period cell.  For the endpoint lattice $\delta=0$, $K_{h,0}^-(t(1-\ii))\sim h/(4\pi t^2)$ as $t\downarrow0$. This is a degeneration of the present kernel representation, not a symmetric divergence of the actual quadrature at both ends of the shift interval.  At fixed $T,h,z\ne0$,
\begin{equation}\label{eq:asymmetric-endpoints}
 \begin{aligned}
 Q_{h,\delta}(F_T)&\longrightarrow h\sum_{k\ge1}F_T(kh)
       &&(\delta\downarrow0),\\
 hF_T((1-\delta)h)&=\frac{\Psi_T(0;z)\sqrt h}{2\sqrt{1-\delta}}+O(1)
       &&(\delta\uparrow1).
 \end{aligned}
\end{equation}
The first limit follows by dominated convergence of the node sum and the second by expanding $\Psi_T$ at zero; only the latter generically diverges.  The actual LP error also combines quadrature and truncation terms with complex phases, so Proposition~\ref{prop:shift} neither orders these errors nor identifies a universally best finite-$N$ shift.

\subsection{A localized notch removes artificial prefactors}
A global horizontal contour would pass too close to the moving pole line and would introduce powers of $T$ that are not part of the true leading scale. The remedy is local: keep the deep contour everywhere except for a narrow notch around the nearest pole.  The next theorem shows that this recovers the full nearest-pole exponent.

For $z=r\e^{\ii\theta}\ne0$ put
\begin{equation}\label{eq:pole-geometry}
X=T+\alpha\log r\le T,\qquad a_-(\theta)=\alpha(\pi-\theta),\quad a_+(\theta)=\alpha(\pi+\theta).
\end{equation}
The poles of $\Phi_T$ are $X+\ii\alpha(\theta+(2\ell+1)\pi)$, $\ell\in\mathbb Z$; all are on the vertical line $\Re w=X$. Let us set
\begin{equation}\label{eq:q0}
q_0=\frac{4\pi\alpha(\pi-\Theta)}h.
\end{equation}

\begin{theorem}[Uniform shifted quadrature bound]\label{thm:quad}
Fix $h>0$, $D\Subset(0,1)$, and a polynomial $P$ of degree at most $m$. For $T\ge1$ and $F_T$ from~\eqref{eq:Phi}, write $\mathcal E_{h,\delta}(T,z)=Q_{h,\delta}(F_T(\cdot;z))- \int_0^\infty F_T(u;z)\,du$.  Then
\begin{equation}\label{eq:quad-sharp}
 \sup_{\delta\in D}\sup_{z\in\Sig}
 \left|Q_{h,\delta}(F_T(\cdot;z))-\int_0^\infty F_T(u;z)\,du\right|
 \le C\begin{cases}
 (1+T)^m\e^{-T},&q_0\ge1,\\
 \e^{-q_0T},&0<q_0<1.
 \end{cases}
\end{equation}
In the second line the constant may deteriorate as $q_0\uparrow1$.
\end{theorem}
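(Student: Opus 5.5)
We start from Lemma~\ref{lem:Abel} applied to $f=F_T(\cdot;z)$; its hypotheses were verified after the lemma, uniformly for $\delta\in D$. This writes $\mathcal E_{h,\delta}(T,z)$ as the Abel limit of $\sum_{n\ne0}\rho^{|n|}\e^{-2\pi\ii n\delta}\widehat F_T(n/h;z)$. By \eqref{eq:Fourier-w} each mode is an integral over $w\in(0,\infty)$ of $\Psi_T(w;z)\e^{-2\pi\ii nw^2/h}$. At fixed $\rho<1$ we sum the positive modes under the integral; this is legitimate on any contour where $|\e^{-2\pi\ii w^2/h}|\le1$. The result is
\[
\int\Psi_T(w;z)K^-_{\rho}(w)\,dw,\qquad K^-_\rho=\frac{\rho q_-}{1-\rho q_-}.
\]
The negative modes give the analogous integral with $K^+_\rho$.

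We then deform $[0,\infty)$, for the $K^-$ term, into the lower-right quadrant. The contour starts along the diagonal $w=t(1-\ii)$ from $0$, where Proposition~\ref{prop:shift} bounds $|K^-_\rho|\le A_\delta$ uniformly in $\rho$ and $\delta\in D$. It then runs along the horizontal line $\Im w=-Y$ and closes at $\Re w\to\infty$, where $\Phi_T$ decays like $\e^{-(w-T)/\kappa}$. The choice of $Y$ is dictated by the poles of $\Phi_T$. Those in the lower half-plane are $X-\ii\alpha(\pi-\theta)-2\pi\ii\alpha\ell$, and the nearest one lies at depth $a_-(\theta)\ge\alpha(\pi-\Theta)$. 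If the contour reaches depth $y$ at abscissa $x\approx T$, \eqref{eq:kernel-envelope} gives $|K^-|\lesssim\e^{-bxy}$ with $b=4\pi/h$. For the pole at $(X,a_-)$ this factor is $\e^{-bXa_-}$, and since $bXa_-\ge q_0X$ it is at least as small as $\e^{-q_0X}$.

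We split according to $q_0$.

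\emph{Case $q_0<1$.} We take $Y$ just below $a_-$, or pass just above the first pole. The quantity $|\Phi_T(w)|\e^{-bxy}$ is then of size $\e^{x-T}\cdot\e^{-bxa_-}$ for $x\le X$, and $|z|\e^{-(x-T)/\kappa}\e^{-bxa_-}$ for $x\ge X$. Its maximum over the contour is attained near $x=T$ (in the worst case $r=1$, so $X=T$), giving $\e^{-q_0T}$.

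\emph{Case $q_0\ge1$.} Here the kernel factor is at least as small as $\e^{-T}$ once the contour is deep enough, so the $\e^{-T}$ coming from the endpoint region of $\Psi_T$ dominates. The polynomial $P(w-T)$ contributes at most $(1+T)^m$.

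The main obstacle is to keep the contour deep without crossing the moving pole line $\Re w=X$, and without a global horizontal contour at distance $O(1/T)$, which would introduce spurious powers of $T$. This is where the localized notch enters. We deform the contour to depth $Y=a_-+\eta$ with $\eta>0$ fixed, except on a segment $|x-X|\le\epsilon$, where the contour dips back to depth $a_--\eta'$ around the nearest pole. The notch has length $O(1)$, and on it $|\Phi_T|$ is bounded by $C/\eta'$ by \eqref{eq:sector-separation}-type separation. Its contribution is therefore $O(\e^{-bXa_-})\cdot\e^{X-T}$, possibly multiplied by $(1+T)^m$ from $P$, with no extra loss. The corresponding computation for the opposite case $\theta\to-\Theta$ uses $a_+$ and $K^+$.

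Two points remain. The bounds on the contour are uniform in $\rho\in(0,1)$, $\delta\in D$ and $z\in\Sig$, so dominated convergence removes the Abel damping. The endpoint $w=0$ needs no regularization because $\Psi_T$ is smooth there; only the diagonal start near $0$ is used, where the integrand is $O(\e^{-T})$ by \eqref{eq:real-envelope}. The case $z=0$ is handled by continuity. The constant deteriorates as $q_0\uparrow1$ because the two exponents merge and the $(1+T)^m$ factor is no longer absorbed.
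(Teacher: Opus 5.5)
\textbf{There is a genuine gap in the notch estimate.} Your overall plan matches the paper's: Abel damping, the substitution $u=w^2$, a diagonal start controlled by Proposition~\ref{prop:shift}, horizontal pieces deeper than the nearest pole, a local notch, and dominated convergence in $\rho$. But the notch is the estimate on which the sharp prefactors depend, and you assert it rather than prove it. The bound you give for it does lose.

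Suppose the notch runs at depth $a_{-}-\eta'$ over a segment of length $O(1)$.
\begin{itemize}
\item With $\eta'$ fixed, the kernel there is $\e^{-bX(a_{-}-\eta')}=\e^{-bXa_{-}}\e^{b\eta'X}$, an exponential loss.
\item Keeping the factor $\e^{-bXa_{-}}$ forces $\eta'\lesssim X^{-1}$. Then your bound $|\Phi_T|\le C/\eta'$ costs a factor $\asymp X$. Even the sharper pointwise bound $\e^{X-T}/(|v|+\eta')$, with $v=x-X$, integrates to $\asymp\log X$ over a flat notch of fixed length.
\end{itemize}
Such a factor is absorbed only when $q=ba_{-}(\theta)$ exceeds $1$ by a fixed amount. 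At balanced spacing ($q_0=1$, $\theta=\Theta$) your notch bound is $C(1+s)^m(T-s)\e^{-T}$ with $s=T-X$. Its supremum over $s$ is of order $T^{m+1}\e^{-T}$, not $(1+T)^m\e^{-T}$. For $q_0<1$ it gives $C\,T\e^{-q_0T}$ at $s=0$, which contradicts the prefactor-free second line. So ``with no extra loss'' is exactly what remains unproved.

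\textbf{The missing idea.} The part of the contour lying \emph{above} the pole (shallower than depth $a$) must be confined to an $O(X^{-1})$ neighborhood of it. On the nearby pieces that are deeper than the pole, the kernel decays like $\e^{-bX(y-a)}$, and you must use that decay to beat the $1/\mathrm{dist}$ growth of $\Phi_T$. The paper realizes this with the wedge $y(x)=a-X^{-1}+|x-X|$ on $|x-X|\le1$. Then:
\begin{itemize}
\item $xy\ge aX-1+\tfrac12X|v|$;
\item the denominator satisfies $\bigl|1-\exp\bigl((v+\ii(X^{-1}-|v|))/\alpha\bigr)\bigr|\ge c(\alpha)(|v|+X^{-1})$;
\item the notch integral is bounded by $\int_{-1}^{1}\e^{-bX|v|/2}(|v|+X^{-1})^{-1}\,dv\le2\int_0^\infty\e^{-by/2}(1+y)^{-1}\,dy$.
\end{itemize}
This gives $C(1+s)^m\e^{-T+(1-q)X}$ with no factor $X$ or $\log X$.

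\textbf{Two smaller repairs.}
\begin{itemize}
\item On the notch, bound $|P(w-T)|$ by $C(1+s)^m$, not by $(1+T)^m$. Only the former is absorbed by $\e^{-(1-q_0)s}$ when $q_0<1$.
\item The notch geometry requires $X$ large: the diagonal must end before the notch, and $X=T+\alpha\log r$ is unbounded below near the vertex. The region $X\le X_*$ therefore needs its own argument. One option is the direct real-axis bound $|\Psi_T(w;z)|\le C(1+T+w)^m\e^{-T-w/\kappa}$, applied to both the integral and the node sum.
\end{itemize}
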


\begin{proof}
All constants in the proof may depend on the fixed parameters, but not on $T,z,\delta$.  Write $a_0=\alpha(\pi-\Theta)>0$, $a_1=\alpha(\pi+\Theta)$, $b=4\pi/h$, and choose
\begin{equation}\label{eq:Xstar}
X_*\ge\max\{4,\,2/a_0,\,2/\alpha,\,4(a_1+2)\}.
\end{equation}
The numerical constants in this threshold are inessential: they make the notch positive, place its left horizontal segment before the pole line, and make the angles below small.  The proof has one bookkeeping principle.  In the large-$X$ region considered below we have $X>X_*>0$.  With $a=a_-(\theta)$ below and $a=a_+(\theta)$ above, the near-pole estimate has the form
\[
   \e^{-T+(1-q)X},\qquad q=ba\ge q_0.
\]
Since $X>0$, if $q_0\ge1$ then $q\ge1$ and this is at most $\e^{-T}$.  If $q_0<1$, then, with $s=T-X\ge0$,
\[
   \e^{-T+(1-q)X}
   \le \e^{-T+(1-q_0)X}
   =\e^{-q_0T}\,\e^{-(1-q_0)s}.
\]
Thus the two branches of~\eqref{eq:quad-sharp} come from the same contour geometry; the rest of the proof is devoted to showing that no contour piece loses this exponent.

\emph{The bounded-$X$ region.} For real $w\ge0$,~\eqref{eq:sector-separation} gives
\begin{equation}\label{eq:small-X}
 \begin{aligned}
 |\Psi_T(w;z)|&\le
 C(1+T+w)^m\frac{\e^{w-T}}{1+\e^{(w-X)/\alpha}}\\
 &\le C\e^{X_*/\alpha}(1+T+w)^m\e^{-T-w/\kappa},
 \qquad X\le X_*.
 \end{aligned}
\end{equation}
The integral in $w$, and the sum with factors $h/(2\sqrt{(k-\delta)h})$, are bounded by $C(1+T)^m\e^{-T}$, uniformly on $D$. If $q_0<1$, this is at most $C\e^{-q_0T}$.  This also covers $z=0$ by continuity.  Moreover, since $X=T+\alpha\log r\le T$ for $0<r\le1$, if $T\le X_*$ then $X\le X_*$ throughout the sector; in that case the bounded-$X$ estimate already covers the whole problem.

\emph{The contour for $X>X_*$.} We describe the lower contour; the upper one is obtained by reversing imaginary signs and using $a=a_+(\theta)$ instead of $a=a_-(\theta)$. With
\begin{equation}\label{eq:notch-height}
a=a_-(\theta),\qquad \eta=a-X^{-1},\qquad B=\eta+1,
\end{equation}
take the diagonal $w=t(1-\ii)$, $0\le t\le B$, followed by the graph $w=x-\ii y(x)$, $x\ge B$, where
\begin{equation}\label{eq:notch-graph}
y(x)=\begin{cases}
 B,&B\le x\le X-1, \mbox{ or }x\ge X+1\\
 \eta+|x-X|,&|x-X|\le1.
 \end{cases}
\end{equation}
Condition~\eqref{eq:Xstar} ensures $B<X-1$ and $\eta\ge a_0/2$.  At $x=X$ the contour is above the nearest lower pole, since $y(X)=a-X^{-1}<a$.  All other lower poles are deeper on the same vertical line.  Hence the region between the real half-line and this contour contains no pole, even though its horizontal portions lie below the nearest-pole height.  See Figure~\ref{fig:contour}.

Apply Lemma~\ref{lem:Abel} and~\eqref{eq:Fourier-w}.  Write $\mathcal C_-$ and $\mathcal C_+$ for the lower and upper contours, each oriented from $0$ toward infinity. At fixed $0<\rho<1$, deform each positive Fourier mode through $\mathcal C_-$ and each negative mode through $\mathcal C_+$.  On a closing segment $w=R-\ii y$, $0\le y\le B$, the positive-mode Gaussian has modulus $\exp(-4\pi nRy/h)\le1$, while $|\Psi_T(R-\ii y;z)|\le C(1+T+R)^m\exp(-(R-T)/\kappa)$ for large $R$; hence the far vertical side vanishes.  The upper closing side is identical after reflection.  Before $\rho\uparrow1$, the Abel-damped mode series is absolutely summable and may therefore be summed under the contour integrals.  Moreover the damped geometric kernels satisfy the same bounds as the undamped ones: Proposition~\ref{prop:shift} controls the diagonal uniformly in $\rho$ and $\delta\in D$, while~\eqref{eq:kernel-envelope} controls every graph and horizontal piece.  The piecewise-contour corners have measure zero, and the explicit estimates below give an integrable majorant independent of $\rho$, $\delta\in D$, and $z\in\Sig$.  Dominated convergence can therefore remove the Abel factor and gives the exact identity
\begin{equation}\label{eq:contour-identity}
 \mathcal E_{h,\delta}(T,z)
=\int_{\mathcal C_-}\Psi_T(w;z)K^-_{h,\delta}(w)\,dw +\int_{\mathcal C_+}\Psi_T(w;z)K^+_{h,\delta}(w)\,dw.
\end{equation}
The modes are deformed before summation; the summed kernels have sampling poles on the original real axis and cannot be deformed there directly.

\emph{The two sloping sides of the notch.} Put $v=x-X$, $|v|\le1$, and $s=T-X\ge0$.  On the notch,
\begin{equation}\label{eq:notch-product}
xy=(X+v)(a-X^{-1}+|v|)
 \ge aX-1+\tfrac12X|v|.
\end{equation}
The denominator of $\Phi_T$ after cancellation of its $z$ factor is
\begin{equation}\label{eq:notch-denom}
1-\exp\!\left(\frac{v+\ii(X^{-1}-|v|)}\alpha\right),
 \;\,
 \left|1-\exp\!\left(\frac{v+\ii(X^{-1}-|v|)}\alpha\right)\right|
 \ge c(\alpha)(|v|+X^{-1}).
\end{equation}
Here $c(\alpha)>0$ is independent of $X$, $v$, and $\theta$ in the stated ranges.  To see the inequality, use $|1-\e^{v/\alpha+\ii\phi}|\ge|1-\e^{v/\alpha}|\ge c(\alpha)|v|$ when $|v|\ge(2X)^{-1}$.  When $|v|<(2X)^{-1}$, the imaginary argument lies between $(2\alpha X)^{-1}$ and $(\alpha X)^{-1}\le1/2$; the imaginary part gives a lower bound $c(\alpha)X^{-1}$. Also $|P(w-T)|\le C(1+s)^m$ and $|dw|\le\sqrt2\,dv$. Equations~\eqref{eq:kernel-envelope}--\eqref{eq:notch-denom} therefore bound the notch contribution by
\begin{align}\label{eq:notch-integral}
C(1+s)^m\e^{X-T-baX}
 \int_{-1}^1\frac{\e^{-bX|v|/2}}{|v|+X^{-1}}\,dv
 &\le C(1+s)^m\e^{-T+(1-ba)X}.
\end{align}
Indeed the substitution $y=X|v|$ bounds the integral by $2\int_0^\infty\e^{-by/2}/(1+y)\,dy$. The apparent $O(X)$ denominator amplification is thus localized to an $O(X^{-1})$-width region.  No factor $X$ or $\log X$ is lost.

\emph{The remaining contour pieces.} On both horizontal pieces $|x-X|\ge1$.  The reverse triangle inequality implies
\begin{equation}\label{eq:off-resonance}
|1+\e^{(w-X)/\alpha-\ii\theta}|
 \ge c(\alpha)(1+\e^{(x-X)/\alpha}).
\end{equation}
Let $q=ba$ and $q_B=bB$; then $q\ge q_0$ and $q_B-q\ge b/2$. On the left horizontal piece the integral is at most
\begin{equation}\label{eq:left-horizontal}
C\e^{-T}\int_0^X(1+T-x)^m\e^{(1-q_B)x}\,dx.
\end{equation}
If $q_0\ge1$, this is bounded by $C(1+T)^m\e^{-T}$, since $q_B\ge1+b/2$.  If $q_0<1$, rewrite its integrand, including $\e^{-T}$, as
\[
 \e^{-q_0T}(1+T-x)^m
 \e^{-(1-q_0)(T-x)}\e^{-(q_B-q_0)x}.
\]
The polynomial times the first exponential is bounded for $T-x\ge0$, and the last exponential is at most $\e^{-bx/2}$.  This gives $C\e^{-q_0T}$ without an algebraic prefactor.

On the right horizontal piece put $x=X+v$, $v\ge1$. The logistic tail and~\eqref{eq:kernel-envelope} lead to
\begin{equation}\label{eq:right-horizontal}
C(1+s)^m\e^{-T+(1-q)X}
       \int_1^\infty(1+v)^m\e^{-(q_B+1/\kappa)v}\,dv
 \le C(1+s)^m\e^{-T+(1-q)X}.
\end{equation}
The diagonal has bounded length, $t\le B<X-1$, and contributes at most $C A_D(1+T)^m\e^{-T}$, where $A_D=\sup_{\delta\in D}A_\delta<\infty$. Finally, recall that this part of the proof has $X>X_*>0$.  If $q_0\ge1$, then $q\ge1$ and the right sides of \eqref{eq:notch-integral} and~\eqref{eq:right-horizontal} are bounded by $C(1+T)^m\e^{-T}$.  If $q_0<1$, the same comparison $q\ge q_0$ and $X>0$, followed by $s=T-X$, gives
\[
C(1+s)^m\e^{-T+(1-q)X}
\le C\e^{-q_0T}(1+s)^m\e^{-(1-q_0)s}
\le C\e^{-q_0T}.
\]
For the upper contour, $a=a_+(\theta)=\alpha(\pi+\theta)\ge\alpha(\pi-\Theta)$, so again $q=ba\ge q_0$ and every preceding estimate applies after reflection.  Combining all pieces proves~\eqref{eq:quad-sharp}.
\end{proof}

\begin{figure}[!tbp]
 \centering
 \includegraphics[width=.82\linewidth]{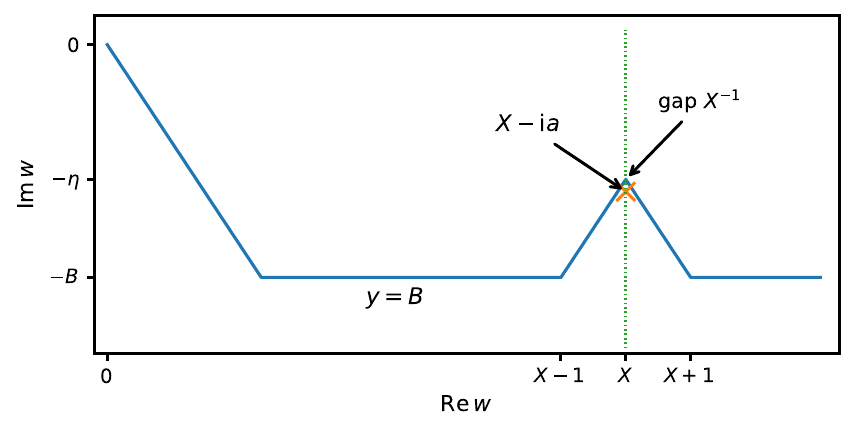}
 \caption{Schematic lower contour, oriented from $0$ to $+\infty$ along the real half-line before deformation.  All integrand poles have real part $X$.  The contour reaches height $B$, follows the horizontal piece to $X-1$, makes its local notch across $[X-1,X+1]$, and then returns to height $B$.  The notch passes above the nearest pole at $X-\ii a$; its vertical gap is $X^{-1}$ and the effective integration width near that pole is also $O(X^{-1})$.}
 \label{fig:contour}
\end{figure}

\begin{remark}[Why a global horizontal bound is wasteful]\label{rem:prefactor}
Keeping an entire horizontal contour at height $a-O(T^{-1})$ and bounding its denominator by $O(T)$ loses a power of $T$ before integration; at the balanced exponent an additional interval-length factor can appear. The notch avoids both losses.  The factor $(1+T)^m$ in the first line of~\eqref{eq:quad-sharp} comes from the logarithmic weight near the truncation endpoint, not from proximity to a pole.  In the second line, $(1+T-X)^m$ is absorbed by $\e^{-(1-q_0)(T-X)}$. Thus the distinction between the two prefactors is a consequence of the same proof, not a separate argument for each logarithmic power.
\end{remark}

\section{The finite LP construction}\label{sec:bridge}

The quadrature estimate already contains the difficult part of the analysis.  What remains is algebraic bookkeeping: identify the quadrature poles, discard a sufficiently small right tail, and replace the poles that are far from the sector by a polynomial.  No additional contour argument is required.  It is useful to keep the roles of the parameters visible throughout this section: $h$ fixes the node spacing, $T$ is the natural root-exponential scale, and $M$ is the right-tail cutoff.

Set
\begin{align}\label{eq:discretization}
 h&=\alpha^2\sigma^2, \qquad T=\sqrt{Nh}, \qquad u_k=(k-\delta)h,\\
 w_k&=\sqrt{u_k}, \qquad
 M=\left\lceil\frac{N}{(1-\alpha)^2}+\delta-1\right\rceil.\nonumber
\end{align}
Then $u_N<T^2<u_{N+1}$ and $w_{M+1}-T\ge\kappa T$.  Thus the index $N$ separates the prescribed core poles from the poles lying to the left of $-1$, while $M$ is far enough to make the omitted quadrature tail exponentially smaller than the active approximation scale.  For every $k\ge1$ write
\begin{equation}\label{eq:weights}
a_k=\e^{(w_k-T)/\alpha},\qquad b_k=\frac{h}{2w_k}\Lambda_{m,\alpha}(w_k-T)\e^{w_k-T}.
\end{equation}
The finite quadrature is simply
\begin{equation}\label{eq:finite-Q}
S_{N,\delta}(z)=\sum_{k=1}^M b_k\frac{z}{z+a_k}.
\end{equation}
For $k\le N$, $-a_k$ is the prescribed pole location~\eqref{eq:poles} and lies in $(-1,0)$; for $k>N$ it lies strictly to the left of $-1$.

\begin{lemma}[Tails and an exact polynomial remainder]\label{lem:bridge}
Let $P=\Lambda_{m,\alpha}$ in~\eqref{eq:Phi}.  Uniformly for $\delta\in D$,
\begin{equation}\label{eq:tails}
 \sup_{z\in\Sig}\left|z^\alpha(\log z)^m-
          \int_0^\infty F_T(u;z)\,du\right|
+\sup_{z\in\Sig}h\sum_{k>M}|F_T(u_k;z)|
 \le C(1+T)^m\e^{-T}.
\end{equation}
Put
\begin{equation}\label{eq:Rtheta}
R_\Theta=\max\{1,2\sin(\Theta/2)\}<2,\qquad H_N(z)=\sum_{k=N+1}^M b_k\frac{z}{z+a_k}.
\end{equation}
If $P_{N,n}$ is the degree-$n$ Taylor polynomial of $H_N$ about $z=1$, then for every $n\ge0$,
\begin{equation}\label{eq:compression}
 \norm{H_N-P_{N,n}}_{\Sig}
 \le C\left({R_\Theta}/{2}\right)^{n+1}.
\end{equation}
Thus the choice
\begin{equation}\label{eq:degree}
n_N=\left\lceil\frac{T}{\log(2/R_\Theta)}\right\rceil
\end{equation}
gives polynomial-compression error $O(\e^{-T})$ and degree $O(\sqrt N)$.
\end{lemma}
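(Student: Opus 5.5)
The lemma has three parts: the two tail bounds in \eqref{eq:tails}, the compression estimate \eqref{eq:compression}, and the degree choice \eqref{eq:degree}. I will treat them in that order.

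\emph{Left-truncation tail.} Substituting $u=w^2$ gives
\[
\int_0^\infty F_T(u;z)\,du=\int_0^\infty\Psi_T(w;z)\,dw .
\]
With $s=w-T$ this equals the integral in \eqref{eq:integral} over $s\ge -T$. The first term in \eqref{eq:tails} is therefore the left tail $\int_{-\infty}^{-T}$. By \eqref{eq:real-envelope} it is bounded by $d_\Theta^{-1}\int_{-\infty}^{-T}|\Lambda_{m,\alpha}(s)|\e^{s}\,ds\le C(1+T)^m\e^{-T}$, uniformly on $\Sig$ and independent of $\delta$.

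\emph{Right quadrature tail.} For $k>M$ we have $w_k-T\ge w_{M+1}-T\ge\kappa T$. Using the second branch of \eqref{eq:real-envelope} with $|z|\le1$ gives
\[
|F_T(u_k;z)|\le C(1+w_k)^m w_k^{-1}\e^{-(w_k-T)/\kappa}.
\]
Here $w_k=\sqrt{(k-\delta)h}$, and the summand decays like $\e^{-c\sqrt k}$. So the sum is dominated by $h\int_{w_{M}}^\infty(1+w)^m\e^{-(w-T)/\kappa}\,dw$, which I will justify by comparing with $\int\cdots\,du=\int\cdots 2w\,dw$ and using monotonicity for large $w$. This is $O((1+T)^m\e^{-T})$. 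In fact it is much smaller, roughly $\e^{-T}$ times a polynomial, because $(w-T)/\kappa\ge T$ on this range. Shifts $\delta\in D$ only move nodes by less than $h$, so the bound is uniform in $\delta$.

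\emph{Compression.} Each term $z/(z+a_k)=1-a_k/(z+a_k)$ with $k>N$ has $a_k>1$. Its Taylor series about $z=1$ has radius $1+a_k>2$. The sector lies in the closed disk $|z-1|\le R_\Theta$: for $z=r\e^{\ii\theta}$, $|z-1|^2=1-2r\cos\theta+r^2$ is convex in $r$, so its maximum over $r\in[0,1]$ is attained at $r=0$ (value $1$) or $r=1$ (value $2\sin(|\theta|/2)$). This is exactly $R_\Theta$.

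The Taylor remainder of $a/(z+a)$ about $1$ at degree $n$ is $\frac{a}{z+a}\bigl(\frac{1-z}{1+a}\bigr)^{n+1}$. Its modulus is at most $\frac{a}{d_\Theta(a)}\,(R_\Theta/2)^{n+1}$ on $\Sig$ when $a\ge1$, using \eqref{eq:sector-separation} for $|z+a|$. Summing against $|b_k|$ gives
\[
\norm{H_N-P_{N,n}}_{\Sig}\le C\Bigl(\sum_{k>N}|b_k|\Bigr)(R_\Theta/2)^{n+1}.
\]
The main obstacle is bounding $\sum_{k=N+1}^M|b_k|$ independently of $N$. The weights $b_k$ carry the factor $\e^{w_k-T}$, which grows for $k>N$. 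The fix is to pair it with the prefactor $a_k/|z+a_k|\le C$ and work instead with the original term $b_k\,z/(z+a_k)$. Since $|z|\le1<a_k$, its size is about $b_k/a_k=\frac{h}{2w_k}\Lambda(w_k-T)\e^{(w_k-T)(1-1/\alpha)}$, which decays like $\e^{-(w_k-T)/\kappa}$.

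I will therefore expand $b_k z/(z+a_k)=b_k\bigl(1-a_k/(z+a_k)\bigr)$ differently, about $0$, or bound the remainder as $b_k\cdot\frac{z}{z+a_k}$ minus its Taylor polynomial. The remainder of $z/(z+a)$ about $1$ equals $-\frac{a}{z+a}\bigl(\frac{1-z}{1+a}\bigr)^{n+1}$, with modulus $\le C(R_\Theta/(1+a))^{n+1}$. For large $a_k$ the extra factor $(2/(1+a_k))^{n+1}\le 2/(1+a_k)$ absorbs the $\e^{w_k-T}$ growth, since $b_k/a_k$ is summable. Hence
\[
\sum_{k>N}|b_k|\,\frac{2}{1+a_k}\le C\sum_{k>N}\frac{h}{w_k}(1+|w_k-T|)^m\e^{-(w_k-T)/\kappa}.
\]
This is a Riemann sum of $\int_T^\infty(1+w-T)^m\e^{-(w-T)/\kappa}\,dw$, hence bounded, which proves \eqref{eq:compression}. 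Finally, taking $n=n_N$ gives $(R_\Theta/2)^{n_N+1}\le\e^{-T}$, and $n_N=O(T)=O(\sqrt N)$.
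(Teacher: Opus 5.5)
Your proposal is correct and follows essentially the same route as the paper. You bound the left tail via \eqref{eq:real-envelope}, the right tail by a decreasing sum--integral comparison, and the compression via $|z-1|\le R_\Theta$ and the exact remainder \eqref{eq:exact-remainder}, where the key step $(2/(1+a_k))^{n+1}\le 2/(1+a_k)$ converts the growing weights into the summable $|b_k|/a_k\le C w_k^{-1}(1+w_k-T)^m\e^{-(w_k-T)/\kappa}$. The only step to tighten is your final ``Riemann sum'' claim: the paper makes it uniform in $N$ by first using $(1+s)^m\e^{-s/\kappa}\le C\e^{-s/(2\kappa)}$, so that a monotone sum--integral comparison starting at $u=T^2$ applies.
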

\begin{proof}
The left tail was bounded after~\eqref{eq:real-envelope}. For $k>M$ put $s_k=w_k-T\ge\kappa T$. The right tail is bounded by
\[
C h\sum_{k>M}\frac{(1+s_k)^m}{2w_k}\e^{-s_k/\kappa}.
\]
As a function of $s$, $(1+s)^m\e^{-s/\kappa}/(T+s)$ is decreasing on $[\kappa T,\infty)$ for all sufficiently large $T$, as follows by logarithmic differentiation.  A decreasing-function sum--integral comparison, using $ds=du/(2\sqrt u)$, bounds the sum by its first term plus the integral $C\int_{\kappa T}^\infty(1+s)^m\e^{-s/\kappa}\,ds$. Both are $O((1+T)^m\e^{-T})$, uniformly in $\delta\in D$. The remaining bounded range of $T$ is handled by the same tail majorant and the positive lower bound on the first node.

To prove~\eqref{eq:compression}, first observe that $|z-1|\le R_\Theta$ on $\Sig$.  This follows by maximizing the convex quadratic $1+r^2-2r\cos\theta$ over $0\le r\le1$. For $a>1$, the exact remainder of the Taylor polynomial $\tau_{n,a}$ of $z/(z+a)$ at $1$ is
\begin{equation}\label{eq:exact-remainder}
 \frac{z}{z+a}-\tau_{n,a}(z)
=-\frac{a}{z+a}\left(-\frac{z-1}{1+a}\right)^{n+1}.
\end{equation}
By~\eqref{eq:sector-separation}, $|a/(z+a)|\le d_\Theta^{-1}$. Since $2/(1+a)\le1$ and $n+1\ge1$,
\[
 \norm{H_N-P_{N,n}}_{\Sig}
 \le C\left(\frac{R_\Theta}{2}\right)^{n+1}
       \sum_{k>N}|b_k|\frac{2}{1+a_k}.
\]
Here $s_k=w_k-T>0$ and
\[
 \sum_{k>N}|b_k|\frac{2}{1+a_k}
 \le C h\sum_{k>N}\frac{(1+s_k)^m}{2w_k}\e^{-s_k/\kappa}
 \le C.
\]
For the last inequality, bound $(1+s)^m\e^{-s/\kappa}\le C\e^{-s/(2\kappa)}$ and compare the resulting decreasing sum with its integral starting at $u=T^2$. This also proves uniformity in $N$ and $\delta$.  Equation~\eqref{eq:degree} now implies the stated compression error.
\end{proof}

The exact remainder~\eqref{eq:exact-remainder} avoids the auxiliary choice $R_\Theta<R_0<2$, replacing the more conservative degree $\lceil T/\log(R_0/R_\Theta)\rceil$ by~\eqref{eq:degree}. For $\Theta\uparrow\pi$ this is still an expensive existence construction: $\log(2/R_\Theta)\sim(\pi-\Theta)^2/8$.  At balanced spacing the degree coefficient therefore grows as $(\pi-\Theta)^{-3/2}$.  Nothing in the fixed-angle $O(\sqrt N)$ statement makes that coefficient small.

The large polynomial degree above is a consequence of where the rational--polynomial split is made, not of the tapered core itself.  Instead of replacing every pole as soon as it passes $-1$, one can retain a short transition band and compress only the poles that are genuinely far from the unit disk.  These added poles are representation/compression poles outside the unit disk, not a second cluster intended to resolve the branch point.  There is one further economy in the sparse branch.  Put
\begin{equation}\label{eq:lambda-sigma}
 \lambda_\sigma:=\min\{1,q_0\}
 =\min\left\{1,{\sigma_*^2}/{\sigma^2}\right\}.
\end{equation}
The target scale in Theorem~\ref{thm:quad} is $e^{-\lambda_\sigma T}$ up to the fixed logarithmic prefactor in the dense/balanced branch.  If the cutoff is $A=\e^y$, moving it from $1$ to $A$ retains about $(2\alpha T/h)y$ additional poles, while it is enough to approximate the remaining geometric tail to degree about $\lambda_\sigma T/y$.  The optimal split therefore balances these two costs at the accuracy actually required by the active branch.

\begin{proposition}[Rate-matched dimension-controlled hybrid compression]\label{prop:dimension}
Fix $m\in\N_0$ and $A>1$, put $y=\log A$, and consider the singular prototype $g=1$.  For all sufficiently large $N$ define
\begin{equation}\label{eq:hybrid-cut}
K_A=\left\lfloor \delta+\frac{(T+\alpha y)^2}{h}\right\rfloor,
 \qquad n_{A,\lambda}=\left\lceil\frac{\lambda_\sigma T}{y}\right\rceil.
\end{equation}
Retain the quadrature terms with $k\le K_A$ and replace the terms $K_A<k\le M$ by their degree-$n_{A,\lambda}$ geometric expansion about $z=0$. Then the first $N$ prescribed pole locations remain those in \eqref{eq:poles}, the additional $K_A-N$ locations lie in $[-A,-1)$, and the resulting LP function has the same branchwise error bound \eqref{eq:main-bound} as the core-only construction. Its total dimension satisfies, uniformly for $\delta\in D$,
\begin{equation}\label{eq:hybrid-dimension}
L_A=K_A+n_{A,\lambda}+1
 =N+T\left(\frac{2\alpha}{h}y+\frac{\lambda_\sigma}{y}\right)+O(1).
\end{equation}
Within this one-parameter split, the leading correction is minimized by
\begin{equation}\label{eq:Astar}
 \log A_{*,\lambda}=\sqrt{\frac{\lambda_\sigma h}{2\alpha}},
 \qquad
L_{A_{*,\lambda}}=N+2\sqrt{2\alpha\lambda_\sigma N}+O(1).
\end{equation}
For $\sigma\le\sigma_*$ this reduces to $N+2\sqrt{2\alpha N}+O(1)$.  For $\sigma>\sigma_*$ it becomes
\begin{equation}\label{eq:sparse-dimension}
 L_{A_{*,\lambda}}=N+2\frac{\sigma_*}{\sigma}\sqrt{2\alpha N}+O(1).
\end{equation}
At balanced spacing, $\log A_{*,\lambda}=\sqrt{2\pi(\pi-\Theta)}$.
\end{proposition}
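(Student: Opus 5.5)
We split the proof into three parts: the pole locations, the error bound, and the dimension count.

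\emph{Pole locations.} By \eqref{eq:weights}, the pole of the $k$th quadrature term is $-a_k=-\e^{(w_k-T)/\alpha}$. The condition $a_k\le A$ is equivalent to $w_k\le T+\alpha y$, that is, $(k-\delta)h\le (T+\alpha y)^2$. This is precisely $k\le K_A$ with $K_A$ from \eqref{eq:hybrid-cut}. Since $u_N<T^2<u_{N+1}$, the indices $k\le N$ give the prescribed poles \eqref{eq:poles} in $(-1,0)$. The indices $N<k\le K_A$ give locations in $(-A,-1)$. We also note that $K_A\le M$ for large $N$, because $w_{M+1}-T\ge\kappa T\gg\alpha y$.

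\emph{Error bound.} We reuse the chain from Theorem~\ref{thm:main}. The left truncation and the right tail $k>M$ are controlled by Lemma~\ref{lem:bridge}. The quadrature error is controlled by Theorem~\ref{thm:quad} with $P=\Lambda_{m,\alpha}$. Each of these pieces is bounded by the right side of \eqref{eq:main-bound}, which is at most $C(1+T)^m\e^{-\lambda_\sigma T}$ in either branch. (In the sparse branch we use $q_0=\lambda_\sigma$ and $(1+T)^m\e^{-T}\le C\e^{-q_0T}$.)

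The only new ingredient is the compression of $H_A(z)=\sum_{K_A<k\le M}b_kz/(z+a_k)$. Here every $a_k>A$ and $|z|\le1$, so the geometric expansion about $0$,
\[
\frac{z}{z+a}=\sum_{j\ge1}(-1)^{j-1}\frac{z^j}{a^j},
\]
has a truncation remainder after degree $n$ bounded by $|z/a|^{n+1}/(1-|z|/a)\le A^{-(n+1)}/(1-A^{-1})$. By the same summation as in the proof of Lemma~\ref{lem:bridge}, $\sum_{k>N}|b_k|\le C$. Hence the compression error is at most $C\e^{-y(n+1)}$.

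With $n=n_{A,\lambda}$ we have $y(n+1)>\lambda_\sigma T$, so the compression error is $\le C\e^{-\lambda_\sigma T}$. In the sparse branch this matches the bound. In the dense/balanced branch $\lambda_\sigma=1$, and $\e^{-T}$ is below $(1+\sqrt N)^m\e^{-\alpha\sigma\sqrt N}$ since $T=\alpha\sigma\sqrt N$.

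The step I expect to require the most care is uniformity of this compression bound in $\delta\in D$. It needs the sharp geometric remainder, not a Cauchy estimate with an auxiliary radius, so that the degree is exactly $\lceil\lambda_\sigma T/y\rceil$ with no loss in the leading coefficient. Because $A$ is fixed and $|z|\le1<A$, this should follow directly.

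\emph{Dimension count.} The total dimension is $L_A=K_A+n_{A,\lambda}+1$. Expanding the floor gives
\[
K_A=\frac{T^2+2\alpha yT+\alpha^2y^2}{h}+O(1)=N+\frac{2\alpha y}{h}T+O(1),
\]
using $T^2=Nh$ and the fact that $\delta$ is bounded. The ceiling gives $n_{A,\lambda}=\lambda_\sigma T/y+O(1)$. Adding these yields \eqref{eq:hybrid-dimension}, uniformly in $\delta\in D$ since all $O(1)$ terms are independent of $\delta$.

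For the optimal split, minimize $\varphi(y)=(2\alpha/h)y+\lambda_\sigma/y$ over $y>0$. By AM--GM the minimizer is $y_*=\sqrt{\lambda_\sigma h/(2\alpha)}$, with minimum value $\varphi(y_*)=2\sqrt{2\alpha\lambda_\sigma/h}$. Multiplying by $T=\sqrt{Nh}$ gives the correction $2\sqrt{2\alpha\lambda_\sigma N}$, which is \eqref{eq:Astar}.

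Setting $\lambda_\sigma=1$ gives the dense/balanced formula. Setting $\sqrt{\lambda_\sigma}=\sigma_*/\sigma$ gives \eqref{eq:sparse-dimension}.

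At balanced spacing, $h=\alpha^2\sigma_*^2=4\pi\alpha(\pi-\Theta)$. Then $y_*=\sqrt{h/(2\alpha)}=\sqrt{2\pi(\pi-\Theta)}$, which is the stated value of $\log A_{*,\lambda}$.
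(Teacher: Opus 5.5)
Your pole-location argument, your reuse of Theorem~\ref{thm:quad} and Lemma~\ref{lem:bridge} for the left tail, quadrature error and right tail, and your dimension count and optimization all match the paper. The gap is in the one new estimate, the compression of the far terms. You bound each term's remainder uniformly by $A^{-(n+1)}/(1-A^{-1})$ and then claim, citing Lemma~\ref{lem:bridge}, that $\sum_{k>N}|b_k|\le C$. That claim is false, and it is not what the lemma proves: there the sum is $\sum_{k>N}|b_k|\,2/(1+a_k)$.

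For $k>N$ we have $s_k=w_k-T>0$ and $|b_k|=\frac{h}{2w_k}|\Lambda_{m,\alpha}(s_k)|\e^{s_k}$, which grows exponentially in $s_k$. Since $s_k$ reaches about $\kappa T$ near $k=M$, the sum $\sum_{K_A<k\le M}|b_k|$ is of order $\e^{\kappa T}$ up to algebraic factors. Your estimate therefore bounds the compression error only by roughly $\e^{(\kappa-\lambda_\sigma)T}$, which is far from the required $O(\e^{-\lambda_\sigma T})$. As written, the error-bound part of the proof does not close.

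The repair is to keep the $a$-dependence of the remainder instead of discarding it. For $a>A$ and $|z|\le1$, your own expression gives $|z/a|^{n+1}/(1-|z|/a)\le a^{-1}A^{-n}/(1-A^{-1})$; the paper uses the equivalent $A^{-n}/(a-1)$ from \eqref{eq:zero-remainder}. Because $1-1/\alpha=-1/\kappa$, you get $|b_k|/a_k=\frac{h}{2w_k}|\Lambda_{m,\alpha}(s_k)|\e^{-s_k/\kappa}$. The decreasing sum--integral comparison of Lemma~\ref{lem:bridge} then gives $\sum_{k>K_A}|b_k|/a_k\le C$ uniformly in $\delta\in D$, which is \eqref{eq:hybrid-weight-bound}.

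The compression error is then at most $CA^{-n}=C\e^{-yn}\le C\e^{-\lambda_\sigma T}$ for $n=n_{A,\lambda}$. One power of $a_k^{-1}$ is spent making the weights summable, so you obtain $\e^{-yn}$ rather than $\e^{-y(n+1)}$, but the degree $\lceil\lambda_\sigma T/y\rceil$ still suffices. The point that actually needs care is the exponential growth of the transition weights against the pole moduli. The sharp-versus-Cauchy remainder and $\delta$-uniformity are not the issue; uniformity is automatic from the summation.

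A minor slip: the transition poles lie in $[-A,-1)$, not $(-A,-1)$, since $a_k=A$ is allowed for $k\le K_A$. You also need $K_A\ge N$, which follows from $K_A\ge\lfloor N+\delta\rfloor=N$.
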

\begin{proof}
The condition $a_k\le A$ is equivalent to $w_k\le T+\alpha y$, which gives \eqref{eq:hybrid-cut}.  For fixed $A$, $N\le K_A<M$ once $N$ is large.  Thus $1<a_k\le A$ for $N<k\le K_A$.

For $a>1$ and $|z|\le1$ the exact degree-$n$ expansion at the origin is
\begin{equation}\label{eq:zero-remainder}
 \frac{z}{z+a}
=\sum_{j=1}^{n}(-1)^{j-1}\frac{z^j}{a^j} +(-1)^n\frac{z^{n+1}}{a^n(z+a)}.
\end{equation}
Hence, for $a>A$,
\[
 \left|\frac{z}{z+a}-\sum_{j=1}^{n}(-1)^{j-1}\frac{z^j}{a^j}\right|
 \le \frac{A^{-n}}{a-1}.
\]
For $k>K_A$ write $s_k=w_k-T>\alpha y$.  Using $|\Lambda_{m,\alpha}(s)|\le C(1+s)^m$ and
\[
 \frac{\e^{s_k}}{\e^{s_k/\alpha}-1}
 \le C_A\e^{-s_k/\kappa},
\]
the same sum--integral comparison as in Lemma~\ref{lem:bridge} gives
\begin{equation}\label{eq:hybrid-weight-bound}
 \sum_{k>K_A}\frac{|b_k|}{a_k-1}
 \le C_A h\sum_{k>K_A}
 \frac{(1+s_k)^m}{2w_k}\e^{-s_k/\kappa}\le C_A.
\end{equation}
Thus the far-pole compression error is at most
$C_AA^{-n_{A,\lambda}}\le C_A\e^{-\lambda_\sigma T}$.  If $q_0\ge1$, then $\lambda_\sigma=1$ and this is the $e^{-T}$ scale in the first line of Theorem~\ref{thm:quad}.  If $q_0<1$, then $\lambda_\sigma=q_0$ and the compression is matched directly to the $e^{-q_0T}$ scale in the second line; the tails in \eqref{eq:tails} are smaller and are absorbed there.  Together with Theorem~\ref{thm:quad}, this proves the same bound as before.

Finally,
\[
K_A-N=\frac{2\alpha T}{h}y+O(1),\qquad
n_{A,\lambda}=\frac{\lambda_\sigma T}{y}+O(1),
\]
which gives \eqref{eq:hybrid-dimension}.  Minimizing $2\alpha y/h+\lambda_\sigma/y$ over $y>0$ yields \eqref{eq:Astar}.  Since $T=\sqrt{Nh}$, each of the two leading corrections at the minimizer equals $\sqrt{2\alpha\lambda_\sigma N}$.  For $\sigma>\sigma_*$, \eqref{eq:lambda-sigma} gives $\sqrt{\lambda_\sigma}=\sigma_*/\sigma$, proving \eqref{eq:sparse-dimension}.  At $\sigma=\sigma_*$, $h=4\pi\alpha(\pi-\Theta)$ and the final formula for $A_{*,\lambda}$ follows.
\end{proof}

\begin{remark}[What the dimension formula measures]\label{rem:dimension}
The optimization in Proposition~\ref{prop:dimension} is only over this explicit transition-pole split; it is not a lower bound for all LP spaces. It also keeps the prescribed first $N$ tapered poles unchanged, so the additional rational functions resolve only the smooth transition from $|\xi|=1$ to $|\xi|=A_{*,\lambda}$.  In the sparse branch the smaller coefficient in \eqref{eq:sparse-dimension} comes from asking the polynomial compression to reach the proved sparse error scale, not from a new lower bound or a claim of finite-$N$ optimality.  For a general analytic multiplier $g$, the separate polynomial approximation in \eqref{eq:g-approx} contributes its own $O(\sqrt N)$ degree, so the explicit coefficient in \eqref{eq:Astar} is stated for the singular prototypes.
\end{remark}

Proposition~\ref{prop:dimension} optimizes the size of the final representation, not the preprocessing work used to assemble its coefficients. For the straightforward implementation above, $M=N/(1-\alpha)^2+O(1)$ quadrature terms are generated, and accumulating the $n_{A,\lambda}$ polynomial coefficients from the $M-K_A$ far-pole terms costs $O((M-K_A)n_{A,\lambda})$ arithmetic operations.  Once those coefficients have been formed, a value of the hybrid LP requires only $O(K_A+n_{A,\lambda})$ basis evaluations.  Operationally, the construction is: compute $h,T$ and $\lambda_\sigma$, choose $y=\sqrt{\lambda_\sigma h/(2\alpha)}$, retain the terms through $K_A$, and accumulate the remaining terms into the degree-$n_{A,\lambda}$ polynomial using \eqref{eq:zero-remainder}.  No claim of optimal coefficient-generation complexity or floating-point stability is made here; the dimension result concerns the stored and evaluated LP representation.

\subsection{Proof of the main theorem and analytic factors}
We first finish the singular prototype $g=1$.  The general analytic multiplier is then added by ordinary polynomial approximation; no new pole geometry is needed.

For $g=1$ define
\begin{equation}\label{eq:constructive-LP}
r^0_{N,\delta}(z)=\sum_{k=1}^N b_k\frac{z}{z+a_k}+P_{N,n_N}(z).
\end{equation}
The identity $b_k z/(z+a_k)=b_k-b_ka_k/(z+a_k)$ puts this in form~\eqref{eq:LP}, with the constants $b_k$ absorbed in its polynomial. Theorem~\ref{thm:quad} and Lemma~\ref{lem:bridge} give~\eqref{eq:main-bound} for this prototype.  Indeed $q_0\ge1$ is equivalent to $\sigma\le\sigma_*$, and $q_0T=4\pi(\pi-\Theta)\sqrt N/\sigma$. When $q_0<1$, both $(1+T)^m\e^{-T}$ tails are absorbed in $C\e^{-q_0T}$.

For an analytic $g$, geometric polynomial approximation on the compact set $\Sig$, whose complement is connected, supplies polynomials $q_N$ of degree $O(\sqrt N)$ such that
\begin{equation}\label{eq:g-approx}
 \norm{g-q_N}_{\Sig}\le\e^{-\gamma\sqrt N},\qquad
 \sup_N\norm{q_N}_{\Sig}<\infty,
\end{equation}
where any fixed $\gamma>c_\Theta(\sigma)$ may be prescribed after increasing the degree constant; this is the standard Bernstein--Walsh geometric approximation theorem \cite[Chap.~V]{Walsh69}.  Analyticity on a fixed neighborhood gives geometric decay in the polynomial degree, and choosing that degree to be a sufficiently large constant multiple of $\sqrt N$ makes its decay faster than the rational-approximation scale $\e^{-c_\Theta(\sigma)\sqrt N}$.  This is the only non-elementary polynomial-approximation fact needed for the analytic multiplier, not used for the large-pole tail. With $f_{\rm sing}=z^\alpha(\log z)^m$, we have
\[
 \norm{g f_{\rm sing}-q_Nr^0_{N,\delta}}_{\Sig}
 \le\norm{g-q_N}_{\Sig}\norm{f_{\rm sing}}_{\Sig}
+\norm{q_N}_{\Sig}\norm{f_{\rm sing}-r^0_{N,\delta}}_{\Sig}.
\]
Multiplication introduces no new finite poles, because
\begin{equation}\label{eq:g-closure}
 \frac{q_N(z)}{z-\xi}=\frac{q_N(\xi)}{z-\xi}
+\frac{q_N(z)-q_N(\xi)}{z-\xi},
\end{equation}
and the last term is a polynomial of degree at most $\deg q_N-1$.  Hence the polynomial part of $q_Nr^0_{N,\delta}$ has degree at most $n_N+\deg q_N=O(\sqrt N)$, while no new finite poles are introduced.  All bounds remain uniform on $D$.  This proves the main estimate.  Maximizing the two branches in~\eqref{eq:rates} gives \eqref{eq:balanced}; replacing $N$ by $L_{N,\delta}=N+O(\sqrt N)$ does not change the leading exponent.\hfill$\square$

\begin{remark}[The range of $\alpha$]\label{rem:alpha}
For $\nu=\ell+\alpha>1$, $\ell\in\N$, multiplication by $z^\ell$ provides a valid result with the exponent associated with the fractional part $\alpha$.  It does not recover the stronger law with $\nu$ in place of $\alpha$.  Matching that law requires a different regularization of the integral at infinity, not just the analytic-multiplier step. We therefore retain $0<\alpha<1$ in the stated parameter law.  The broader exponent range and the multi-corner construction in~\cite{XXYZ26} remain outside the present scope.
\end{remark}

\subsection{Vertex-scale profile and exact asymptotics}\label{sec:profile}
We now ask a finer question than an upper bound: for the explicit algebraic prototype, what is the actual leading error of this construction?  The natural scale lives near the sector vertex.  After zooming to that scale, the large parameter disappears from the infinite quadrature error and leaves a fixed profile.  This is why an exact constant can be identified.

We prove Theorem~\ref{thm:exact} for $m=0$ and $g=1$.  There are two logically separate tasks.  First we identify the exact $T$-free profile of the \emph{full error} of the infinite shifted rule and show that its supremum is finite and nonzero.  This full error already includes the left truncation introduced before quadrature.  Only after that do we return to the finite LP and choose the right-tail and compression cutoffs so that their combined error is $o(\e^{-T})$; these later finite-realization errors therefore cannot alter the profile constant.  Keep $h=\alpha^2\sigma^2$ fixed and write
\begin{equation}\label{eq:infinite-Q}
 \mathcal Q_{T,h,\delta}(z)
=c_\alpha h\sum_{k=1}^\infty
 \frac{z\e^{w_k-T}}
{2w_k\bigl(\e^{(w_k-T)/\alpha}+z\bigr)},
 \qquad w_k=\sqrt{(k-\delta)h}.
\end{equation}
This is the infinite shifted rule before right-tail truncation and polynomial compression.  To keep the earlier left truncation visible, let
\[
 I_T(z)=c_\alpha\int_{-T}^{\infty}
 \frac{z\e^s}{\e^{s/\alpha}+z}\,ds.
\]
Then, identically,
\[
 z^\alpha-\mathcal Q_{T,h,\delta}
 =(z^\alpha-I_T)+(I_T-\mathcal Q_{T,h,\delta}).
\]
The first term is the left-tail error.  On the vertex scale below it is generally of order $\e^{-T}$, not smaller; after rescaling it becomes a $T$-independent part of $\mathcal H_{h,\delta}$.  Thus the profile is deliberately the profile of the complete target-minus-rule error, not merely of the quadrature error for the truncated integral.

For $X\le T$ and $|\theta|\le\Theta$ set
\begin{equation}\label{eq:vertex-scaling}
z_T(X,\theta)=\exp\!\left(\frac{X-T}{\alpha}+\ii\theta\right).
\end{equation}
Direct substitution into \eqref{eq:infinite-Q} gives the exact identity
\begin{equation}\label{eq:profile-identity}
 \e^T\bigl[z_T(X,\theta)^\alpha-
 \mathcal Q_{T,h,\delta}(z_T(X,\theta))\bigr]
=\mathcal H_{h,\delta}(X,\theta).
\end{equation}
Thus the scaled full error contains no hidden $T$-dependent remainder; in particular the left-tail term displayed above is already part of $\mathcal H_{h,\delta}$.  Since $X=T+\alpha\log|z|$, the converse change of variables also gives
\begin{equation}\label{eq:profile-sup}
 \e^T\norm{z^\alpha-\mathcal Q_{T,h,\delta}}_{\Sig}
=\sup_{X\le T,\ |\theta|\le\Theta} |\mathcal H_{h,\delta}(X,\theta)|.
\end{equation}

We record the two points needed to pass to the limit.  First, for $X$ in a compact interval the series in \eqref{eq:profile} converges locally uniformly: its tail is bounded by $C\e^{X/\alpha}\e^{-(1/\alpha-1)w_k}/w_k$. Second, if $0<\sigma\le\sigma_*$, then $q_0\ge1$ in Theorem~\ref{thm:quad}.  Combining that theorem for $m=0$ with the left-tail estimate following \eqref{eq:real-envelope} yields
\begin{equation}\label{eq:infinite-dense-bound}
 \norm{z^\alpha-\mathcal Q_{T,h,\delta}}_{\Sig}\le C\e^{-T}.
\end{equation}
Given any real $X$, choose $T\ge\max\{1,X\}$ in \eqref{eq:profile-identity}; \eqref{eq:infinite-dense-bound} then proves that $\mathcal H_{h,\delta}$ is bounded on all of $\mathbb R\times[-\Theta,\Theta]$.  For finite $T$, only the portion $X\le T$ of the profile is sampled; these domains increase to all of $\mathbb R$ as $T\to\infty$.  Hence the right-hand side of \eqref{eq:profile-sup} increases to the finite constant $C_{h,\delta,\Theta}$ in \eqref{eq:profile-constant}.

The constant is strictly positive.  As $X\to-\infty$, uniformly for $|\theta|\le\Theta$,
\begin{equation}\label{eq:profile-left}
 \mathcal H_{h,\delta}(X,\theta)
=\e^{X+\ii\alpha\theta}+O(\e^{X/\alpha}).
\end{equation}
Indeed, for sufficiently negative $X$ the denominator in each summand of \eqref{eq:profile} is comparable with $\e^{(w_k-X)/\alpha}$ and the remaining $k$-sum converges.  Since $1/\alpha>1$, the remainder is $o(\e^X)$ as $X\to-\infty$.  For any fixed $|\theta|\le\Theta$ we therefore have
\[
   \e^{-X}\mathcal H_{h,\delta}(X,\theta)\longrightarrow \e^{\ii\alpha\theta},
\]
so the profile is not identically zero and its supremum is strictly positive.  Hence
\begin{equation}\label{eq:infinite-exact}
 \norm{z^\alpha-\mathcal Q_{T,h,\delta}}_{\Sig}
=\bigl(C_{h,\delta,\Theta}+o(1)\bigr)\e^{-T}.
\end{equation}

It remains only to make the finite LP construction faithful at this scale. Put $\ell_T=\log(1+T)$ and choose
\begin{equation}\label{eq:profile-cutoffs}
S_T=\kappa(T+2\ell_T),\quad M_T=\left\lceil\frac{(T+S_T)^2}{h}+\delta-1\right\rceil,
 \quad
 \widetilde n_T=
 \left\lceil\frac{T+2\ell_T}{\log(2/R_\Theta)}\right\rceil.
\end{equation}
Truncate \eqref{eq:infinite-Q} at $M_T$, keep the first $N$ poles, and replace $N<k\le M_T$ by the degree-$\widetilde n_T$ Taylor polynomial about $z=1$ using \eqref{eq:exact-remainder}.  For the resulting $\widetilde r_{N,\delta}$, the estimate from Lemma~\ref{lem:bridge} gives
\begin{equation}\label{eq:profile-right-small}
 \norm{\mathcal Q_{T,h,\delta}-\widetilde r_{N,\delta}}_{\Sig}
 \le C{\e^{-T}}{(1+T)^{-2}}.
\end{equation}
The two pieces are deliberately matched to the same smaller scale.  For the discarded right tail, $w_{M_T+1}-T\ge S_T$ gives $\e^{-S_T/\kappa}=\e^{-T}(1+T)^{-2}$.  For the polynomial part,
\[
 \left({R_\Theta}/{2}\right)^{\widetilde n_T}
 \le \e^{-T}(1+T)^{-2}
\]
by the definition of $\widetilde n_T$, and~\eqref{eq:compression} then gives the same order.  Thus both finite-realization errors are $o(\e^{-T})$, as required.  Also $\widetilde n_T=O(T)=O(\sqrt N)$, while the first $N$ prescribed poles are unchanged.  Combining \eqref{eq:infinite-exact} and \eqref{eq:profile-right-small} gives the core-only sequence in Theorem~\ref{thm:exact}.

For the dimension-controlled sequence, $\sigma\le\sigma_*$ gives $\lambda_\sigma=1$.  Write $A_*:=A_{*,\lambda}$ from \eqref{eq:Astar}, retain through $K_{A_*}$, and use the zero-centered expansion \eqref{eq:zero-remainder} with
\begin{equation}\label{eq:profile-hybrid-degree}
 \widehat n_T=
 \left\lceil\frac{T+2\log(1+T)}{\log A_*}\right\rceil.
\end{equation}
Keep the same right cutoff $M_T$.  The same separation of scales is used here.  Estimate \eqref{eq:hybrid-weight-bound}, together with $A_*^{-\widehat n_T}\le \e^{-T}(1+T)^{-2}$, gives a compression error $O(\e^{-T}(1+T)^{-2})$, and the discarded right tail has the same order.  Hence replacing the infinite quadrature by the hybrid LP again changes the error only by $o(\e^{-T})$. Moreover,
\[
K_{A_*}-N=\sqrt{2\alpha N}+O(1),\qquad
 \widehat n_T=\sqrt{2\alpha N}+O(\log N),
\]
so \eqref{eq:exact-dimension} follows.  The resulting sequence $\widetilde r^{\rm hyb}_{N,\delta}$ therefore has the same constant in \eqref{eq:exact-asymptotic}.  Finally $T=\alpha\sigma\sqrt N$, completing the proof of Theorem~\ref{thm:exact}. \hfill$\square$

The theorem clarifies the dense/balanced branch: for this explicit, slightly overresolved sequence, $\alpha\sigma$ is the leading exponent with a positive constant, not merely an upper-bound exponent.  At balance, truncation and shifted-quadrature effects remain together in the profile \eqref{eq:profile}; we do not separate their phases by absolute values.

\section{Numerical tests and reproducibility}\label{sec:numerics}

The experiments test the exact profile normalization, the role of the shift, and allocation at fixed total dimension.  We keep numerical searches distinct from certified suprema and explicit coefficients distinct from fitted LPs.  For each construction, $E_N$ denotes the largest error returned by the documented boundary search on truncated sectors; the very-small-radius remainder is bounded separately.  Coefficients and function values use 70 decimal digits unless stated otherwise, whereas search coordinates are double precision.  The full grids, refinement rule, arithmetic, small-radius bound, and resolution study are given in Supplementary Section~SM1.  Repeating six representative cases at half and twice the production resolution changes the production-to-finest maxima by at most $\MeshMaxChange$.

\subsection{The exact asymptotic in the construction of Theorem~\ref{thm:exact}}
Take $\alpha=1/2$, $\Theta=\pi/3$, $\delta=1/2$, and compare $\sigma=3\sigma_*/4$ with balance $\sigma=\sigma_*$.  Table~\ref{tab:exact-num} compares the normalized boundary error $\e^T E_N$ with a separate numerical maximization of the stationary profile \eqref{eq:profile}.
\begin{table}[H]
 \centering\small
 \caption{Direct check of Theorem~\ref{thm:exact} for $\alpha=1/2$, $\Theta=\pi/3$, and midpoint shift. ``Profile search'' maximizes $|\mathcal H_{h,\delta}|$ numerically; the other columns are $\e^T E_N$ for the overresolved hybrid sequence.  Neither search is a certified continuous supremum.}
 \label{tab:exact-num}
 \begin{tabular}{@{}ccccc@{}}\toprule
 $\sigma/\sigma_*$ & Profile search & $N=36$ & $N=64$ & $N=100$\\
 \midrule
 $3/4$ & 1.54192734 & 1.54192734 & 1.54192734 & 1.54192734 \\
 $1$   & 2.43973532 & 2.43973532 & 2.43973532 & 2.43973532 \\
 \bottomrule\end{tabular}
\end{table}
The profile is searched on $X\in[-10,10]$, $0\le\theta\le\Theta$, then checked on $[-20,20]$ and refined on the selected angular side.  The relative window change is at most $\ProfileWindowChange$; an explicit series-tail bound controls truncation at selected points but does not certify the unsearched noncompact $X$-axis.  The normalized finite errors agree with the profile at the eight digits shown.  Supplementary Section~SM5 reports fixed-point finite-realization discrepancies, explains the distinct evaluation paths, and gives a nonmidpoint, nonsquare-root case $\alpha=2/5$, $\Theta=3\pi/5$, $\delta=3/4$.  These are consistency checks, not proofs of the global profile supremum.

\subsection{Interior shifts: a shared bound, different actual errors}
For $\alpha=1/2$, $\Theta=\pi/3$, and $\sigma=\sigma_*$, Figure~\ref{fig:shifts} shows the explicit construction at several shifts; $\delta=0$ is an endpoint reference outside Theorem~\ref{thm:main}.  At $N=100$, a scan in steps of $0.05$ over $0.05\le\delta\le0.95$ has its smallest sampled error at $\delta=\ScanBestDelta$, with ratio $\ScanBestRatio$ to the midpoint error $\ScanMidError$.  This is not a continuous optimization.  The kernel norm $A_\delta/A_{1/2}$ is symmetric and uniquely minimized at $1/2$, whereas the error need not be: truncation, quadrature, and compression carry phases absent from the absolute kernel bound.
\begin{figure}[H]
 \centering
 \includegraphics[width=.49\linewidth]{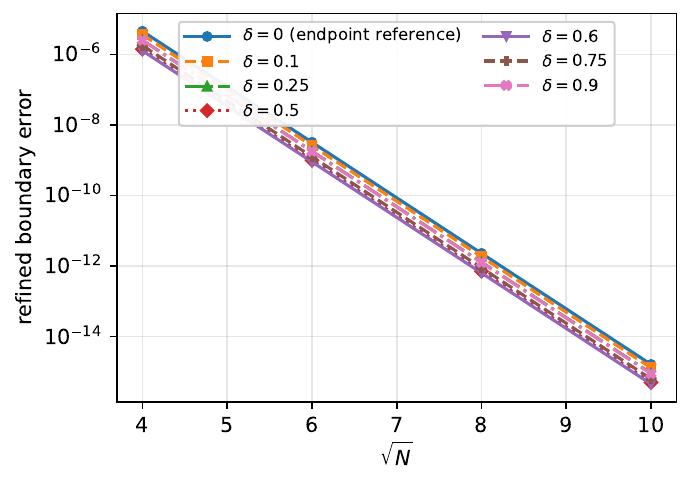}\hfill
 \includegraphics[width=.49\linewidth]{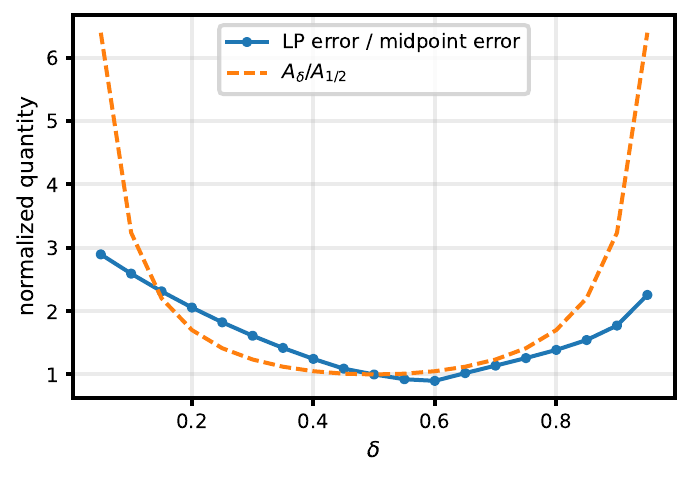}
 \caption{Shift effects. Left: constructive LP errors for fixed shifts at balanced spacing; the endpoint curve lies outside the theorem. Right: the $N=100$ scan, showing that actual error and diagonal-kernel norm need not have the same minimizing shift.}
 \label{fig:shifts}
\end{figure}

\subsection{Spacing, angle, and actual LP dimension}
For $\alpha=1/2$, $\Theta=\pi/3$, and midpoint shift, we fit $\log E_N$ against $\sqrt N$ at $N=36,64,100,144$ for six spacings.  The fitted rates follow $c_\Theta(\sigma)$ with largest relative discrepancy $\SpacingMaxDeviation$; the two-panel rate diagnostic is shown in Supplementary Section~SM1.  Repeating the same four $N$ values at balanced spacing while varying $\Theta$ gives Table~\ref{tab:angles}.  The sectors $\Theta=3\pi/5,3\pi/4,5\pi/6$ are genuinely reentrant because their openings $2\Theta$ exceed $\pi$, and the last two table columns show how costly direct Taylor compression becomes as $\Theta\uparrow\pi$. These finite-$N$ fits test the predicted branches rather than an exact asymptotic equality.  More importantly, $N$ alone hides the smooth-component cost, motivating the transition-pole split.

\begin{table}[!tbp]
 \centering\small
 \caption{Angle experiment at $\alpha=1/2$ and midpoint shift; $c_*=c_\Theta(\sigma_*)$. The last three columns use $N=100$; $n_N$ is the actual Taylor degree and $L=N+n_N+1$.}
 \label{tab:angles}
 {\setlength{\tabcolsep}{5.5pt}\begin{tabular}{@{}ccclrr@{}}
 \toprule
 $\Theta/\pi$ & $c_*$ & Fitted rate & $E_{100}$ & $n_{100}$ & $L_{100}$\\
 \midrule\AngleTableRows\bottomrule
 \end{tabular}}
\end{table}
The corresponding two-panel spacing/angle diagnostic is included in Supplementary Section~SM1.  We keep the angle table here because its last two columns expose the representation cost that the rate plot alone cannot show.

\subsection{Dimension control by a transition-pole band}\label{sec:dimension-num}
At balanced spacing, the direct-Taylor construction becomes expensive on wide sectors, while the transition split retains the same first $N$ poles and a much smaller stored representation.  For $N=100$, $\alpha=1/2$, and midpoint shift, the three reentrant cases $\Theta/\pi=3/5,3/4,5/6$ reduce the total dimensions $234,382,625$ to $121$; the largest relative change in the sampled error is only $\DimensionMaxErrorRatioDeviation$.  In the sparse test $\Theta=\pi/3$, $N=100$, and $\sigma=3\sigma_*/2$, rate matching changes the transition-split dimension from $\SparseUnitDimension$ to $\SparseMatchedDimension$ with error ratio $\SparseMatchedErrorRatio$.  These are finite-$N$ consistency checks; Supplementary Section~SM2 gives the complete counts and error-versus-dimension curves.

The preceding explicit-construction examples hold the core count $N$ fixed, so they isolate the cost of compressing the smooth tail.  A complementary question is whether the transition-pole allocation remains useful when the \emph{total} dimension is held fixed and both spaces are fitted.  Table~\ref{tab:matched-main} answers that question for four controlled pairs.  The core and hybrid spaces use identical training nodes, column scaling, relative SVD cutoff $10^{-13}$, and independent 70-digit boundary evaluation; both polynomial blocks use two-pass Arnoldi orthogonalization~\cite{BNT21}.  Here $E_{\rm core}$ and $E_{\rm hyb}$ denote the searched errors of the fitted functions.  The comparison therefore changes the allocation of the same dimension, not the numerical budget or search protocol.
\begin{table}[H]
 \centering\small
 \caption{Same-total-dimension fitted comparison at $\alpha=1/2$, $m=0$, $\delta=1/2$, and balanced spacing.  The ratio is $E_{\rm hyb}/E_{\rm core}$.  These four tests support the allocation mechanism of Proposition~\ref{prop:dimension}; they do not assert minimax optimality or uniform superiority.}
 \label{tab:matched-main}
 {\setlength{\tabcolsep}{5pt}\begin{tabular}{@{}ccclll@{}}\toprule
 $\Theta/\pi$ & $N$ & $L$ & $E_{\rm core}$ & $E_{\rm hyb}$ & Ratio\\\midrule
 \SpaceBenchmarkMainRows\bottomrule
 \end{tabular}}
\end{table}
Supplementary Section~SM4 gives the two allocations, retained numerical ranks, coefficient norms, singular-value diagnostics, and cutoff/evaluation checks.  In the four pairs above, the hybrid-to-core error ratio ranges from approximately $\SpaceBenchmarkMinRatio$ to $\SpaceBenchmarkMaxRatio$; this is empirical evidence for useful allocation at the tested parameters, not a general stability theorem.

\paragraph{Further diagnostics}
Supplementary Sections~SM3--SM4 collect the logarithmic-power experiments, the analytic-multiplier test, and the coefficient-level re-fitting diagnostics in a fixed hybrid space.  They support the scope of Theorem~\ref{thm:main} and separate the effect of re-fitting coefficients from the allocation comparison in Table~\ref{tab:matched-main}.

\section{Conclusions}\label{sec:conclusions}

The analysis answers three questions that are not settled by the known sector exponent.  First, interior shifts can be handled uniformly on compact subintervals of $(0,1)$: the one-sided Abel formula preserves the integrable endpoint singularity, while the local notch avoids an artificial algebraic loss near the moving pole line.  The same argument covers every fixed logarithmic power.  The midpoint minimizes the diagonal kernel norm, rather than the actual approximation error.

Second, for $z^\alpha$ in the dense and balanced regimes, vertex-scale rescaling turns the full infinite-rule error into a fixed profile and yields an explicit sequence with a positive leading constant; the later finite tail and compression are $o(\e^{-T})$.  Third, rate-matched transition-pole compression gives the singular-prototype dimension $N+2\sqrt{2\alpha\lambda_\sigma N}+O(1)$, with the smaller sparse-branch correction $\sigma_*/\sigma$.  The matched-dimension tests show that this allocation can matter substantially at finite $N$ on wide sectors.

These conclusions are deliberately construction-specific.  They do not provide sector minimax lower bounds, finite-degree shift optimality, exact constants for general analytic multipliers and logarithmic powers, or a floating-point stability theorem.  Sparse-branch exact asymptotics remain open.\smallskip

\textit{Declarations.} OpenAI ChatGPT (GPT-5.6 Sol and GPT-6 Astra) was used to suggest candidate derivation routes and assist with consistency checks for parts of the shifted-quadrature, notched-contour, exact-asymptotic, and dimension-control arguments, as well as literature comparison, exposition, and developing and reviewing numerical verification code. The authors assume responsibility for all content.

\clearpage
\setcounter{page}{1}
\setcounter{section}{0}
\setcounter{subsection}{0}
\setcounter{subsubsection}{0}
\setcounter{equation}{0}
\setcounter{figure}{0}
\setcounter{table}{0}
\setcounter{theorem}{0}
\renewcommand{\thepage}{SM\arabic{page}}
\renewcommand{\thesection}{SM\arabic{section}}
\renewcommand{\thesubsection}{\thesection.\arabic{subsection}}
\renewcommand{\thesubsubsection}{\thesubsection.\arabic{subsubsection}}
\renewcommand{\theequation}{\thesection.\arabic{equation}}
\renewcommand{\thefigure}{\thesection.\arabic{figure}}
\renewcommand{\thetable}{\thesection.\arabic{table}}
\renewcommand{\thetheorem}{\thesection.\arabic{theorem}}
\begin{center}
  {\bfseries\Large Supplementary Materials:\par}
  \medskip
  {\bfseries\large
  Shifted tapered-pole approximation on finite sectors:
  an exact asymptotic and dimension control\par}
  \medskip
  {Fei Xue and Tianqi Zhang}
\end{center}
\bigskip
\input{supplement_body.tex}

\end{document}

%% file: numerical_macros.tex
\newcommand{\ScanBestDelta}{0.60}
\newcommand{\ScanBestRatio}{0.8993}
\newcommand{\ScanMidError}{5.0239\times10^{-16}}
\newcommand{\SpacingMaxDeviation}{7.08\times10^{-5}}
\newcommand{\AngleTableRows}{$0$ & 4.44288 & 4.44288 & $2.29\times10^{-19}$ & 65 & 166 \\
$1/12$ & 4.25374 & 4.25374 & $1.13\times10^{-18}$ & 62 & 163 \\
$1/3$ & 3.62760 & 3.62759 & $5.02\times10^{-16}$ & 53 & 154 \\
$1/2$ & 3.14159 & 3.14158 & $6.52\times10^{-14}$ & 91 & 192 \\
$3/5$ & 2.80993 & 2.80947 & $1.75\times10^{-12}$ & 133 & 234 \\
$3/4$ & 2.22144 & 2.22144 & $6.67\times10^{-10}$ & 281 & 382 \\
$5/6$ & 1.81380 & 1.81380 & $4.21\times10^{-8}$ & 524 & 625 \\}
\newcommand{\MeshTableRows}{1 & $6.32382\times10^{-16}$ & $6.32382\times10^{-16}$ & $6.32382\times10^{-16}$ & $2.1\times10^{-17}$ \\
2 & $2.29317\times10^{-19}$ & $2.29317\times10^{-19}$ & $2.29317\times10^{-19}$ & $4.6\times10^{-17}$ \\
3 & $1.74946\times10^{-12}$ & $1.74946\times10^{-12}$ & $1.74946\times10^{-12}$ & $0$ \\
4 & $6.66906\times10^{-10}$ & $6.66906\times10^{-10}$ & $6.66906\times10^{-10}$ & $4.1\times10^{-14}$ \\
5 & $4.20813\times10^{-8}$ & $4.20813\times10^{-8}$ & $4.20813\times10^{-8}$ & $2.6\times10^{-14}$ \\
6 & $7.27657\times10^{-14}$ & $7.27657\times10^{-14}$ & $7.27657\times10^{-14}$ & $3.0\times10^{-14}$ \\}
\newcommand{\MeshMaxChange}{4.06\times10^{-14}}

\newcommand{\MultiplierTableRows}{36 & 72 & 17 & 126 & $1.56\times10^{-3}$ \\
64 & 95 & 19 & 179 & $1.75\times10^{-5}$ \\
100 & 119 & 21 & 241 & $1.77\times10^{-7}$ \\}

%% file: dimension_macros.tex
\newcommand{\DimensionMaxErrorRatioDeviation}{\ensuremath{1.09\times10^{-4}}}

\newcommand{\SparseUnitDimension}{121}
\newcommand{\SparseMatchedDimension}{115}

\newcommand{\SparseMatchedErrorRatio}{0.999942}

%% file: exact_asymptotic_macros.tex
\newcommand{\ProfileWindowChange}{\ensuremath{2.23\times10^{-18}}}
\newcommand{\ProfileSummaryRows}{%
A & $3/4$ & 1.5419273423 & 1.246732 & $2.23\times10^{-18}$ & $3.24\times10^{-43}$ \\
A & $1$ & 2.4397353188 & 1.750240 & $2.02\times10^{-18}$ & $8.70\times10^{-43}$ \\
B & $3/4$ & 1.3198583855 & 0.708347 & $6.12\times10^{-17}$ & $3.54\times10^{-65}$ \\
B & $1$ & 2.6802864919 & 1.366884 & $8.20\times10^{-18}$ & $1.87\times10^{-64}$ \\
}
\newcommand{\ProfileFiniteRows}{%
A & $3/4$ & 36 & 42 & 9 & 52 & 1.5419273423 & 1.246732 \\
A & $3/4$ & 64 & 72 & 11 & 84 & 1.5419273423 & 1.246732 \\
A & $3/4$ & 100 & 110 & 13 & 124 & 1.5419273423 & 1.246732 \\
A & $1$ & 36 & 42 & 8 & 51 & 2.4397353188 & 1.750240 \\
A & $1$ & 64 & 72 & 10 & 83 & 2.4397353188 & 1.750240 \\
A & $1$ & 100 & 110 & 12 & 123 & 2.4397353188 & 1.750240 \\
B & $3/4$ & 36 & 42 & 8 & 51 & 1.3198583855 & 0.708347 \\
B & $3/4$ & 64 & 72 & 10 & 83 & 1.3198583855 & 0.708347 \\
B & $3/4$ & 100 & 109 & 12 & 122 & 1.3198583855 & 0.708347 \\
B & $1$ & 36 & 42 & 8 & 51 & 2.6802864919 & 1.366884 \\
B & $1$ & 64 & 72 & 10 & 83 & 2.6802864919 & 1.366884 \\
B & $1$ & 100 & 109 & 12 & 122 & 2.6802864919 & 1.366884 \\
}

%% file: space_benchmark_macros.tex
\newcommand{\SpaceBenchmarkMinRatio}{0.0009}
\newcommand{\SpaceBenchmarkMaxRatio}{0.0071}
\newcommand{\SpaceBenchmarkMeshChange}{\ensuremath{4.09\times10^{-14}}}

\newcommand{\SpaceBenchmarkMainRows}{%
$3/4$ & 64 & 81 & $9.233\times10^{-6}$ & $5.258\times10^{-8}$ & 0.0057 \\
$3/4$ & 100 & 121 & $6.947\times10^{-7}$ & $6.187\times10^{-10}$ & 0.0009 \\
$5/6$ & 64 & 81 & $2.054\times10^{-4}$ & $1.463\times10^{-6}$ & 0.0071 \\
$5/6$ & 100 & 121 & $3.123\times10^{-5}$ & $3.889\times10^{-8}$ & 0.0012 \\
}

%% file: supplement_macros.tex
\newcommand{\HybridDiagnosticsRows}{%
$1/3$ & $7.098\times10^{-13}$ & $5.764\times10^{-13}$ & 76 & $8.127\times10^{-1}$ & 1357 \\
$3/5$ & $4.824\times10^{-10}$ & $4.348\times10^{-10}$ & 78 & $2.228\times10^{0}$ & 1270 \\
$3/4$ & $5.670\times10^{-8}$ & $5.257\times10^{-8}$ & 80 & $2.596\times10^{3}$ & 1207 \\
$5/6$ & $1.583\times10^{-6}$ & $1.463\times10^{-6}$ & 81 & $1.303\times10^{4}$ & 1164 \\
}
\newcommand{\CompactBenchmarkRows}{%
$3/4$ & 64 & 81 & $9.233\times10^{-6}$ & $5.258\times10^{-8}$ & $5.695\times10^{-3}$ & (81,80) \\
$3/4$ & 100 & 121 & $6.947\times10^{-7}$ & $6.187\times10^{-10}$ & $8.905\times10^{-4}$ & (121,117) \\
$5/6$ & 64 & 81 & $2.054\times10^{-4}$ & $1.463\times10^{-6}$ & $7.124\times10^{-3}$ & (81,81) \\
$5/6$ & 100 & 121 & $3.123\times10^{-5}$ & $3.889\times10^{-8}$ & $1.245\times10^{-3}$ & (121,119) \\
}
\newcommand{\BenchmarkDiagnosticRows}{%
$3/4$ & 64 & core & 1207 & $2.994\times10^{1}$ & $3.95\times10^{-5}$ \\
$3/4$ & 64 & hybrid & 1207 & $4.536\times10^{4}$ & $1.43\times10^{-13}$ \\
$3/4$ & 100 & core & 1587 & $3.542\times10^{1}$ & $2.70\times10^{-6}$ \\
$3/4$ & 100 & hybrid & 1587 & $1.909\times10^{1}$ & $4.36\times10^{-12}$ \\
$5/6$ & 64 & core & 1164 & $2.874\times10^{1}$ & $4.30\times10^{-4}$ \\
$5/6$ & 64 & hybrid & 1164 & $1.900\times10^{5}$ & $3.30\times10^{-13}$ \\
$5/6$ & 100 & core & 1532 & $3.407\times10^{1}$ & $5.54\times10^{-5}$ \\
$5/6$ & 100 & hybrid & 1532 & $7.674\times10^{3}$ & $3.14\times10^{-13}$ \\
}

\newcommand{\BenchmarkEvaluationCheck}{%
$1.45\times10^{-61}$
}

%% file: supplement_body.tex
This supplementary part gives the numerical specifications, additional experiments, and data organization for the main article. All mathematical statements of the article, including their proofs, remain in the main text. The reported errors and profile diagnostics are numerical evaluations or searches, not interval-arithmetic certifications. References to equations and theorems in the main article are given by their displayed numbers; this supplement's own equations, tables, and figures are numbered with the SM prefix.

For readability, we repeat the basic numerical notation used throughout the supplement.  The singularity exponent satisfies $0<\alpha<1$, $m$ is the fixed nonnegative logarithmic power, $0\le\Theta<\pi$ is the sector half-angle, $N$ is the number of prescribed core poles, $\sigma>0$ is their taper parameter, and $0<\delta<1$ is the interior grid shift.  Write
\[
 \begin{aligned}
 \sigma_*&=2\sqrt{\frac{\pi(\pi-\Theta)}{\alpha}},
 \qquad h=\alpha^2\sigma^2,
 \qquad T=\sqrt{Nh}=\alpha\sigma\sqrt N,\\
 \lambda_\sigma&=\min\{1,(\sigma_*/\sigma)^2\},
 \qquad c_\Theta(\sigma)=\min\left\{\alpha\sigma,
 \frac{4\pi(\pi-\Theta)}{\sigma}\right\}.
 \end{aligned}
\]
Thus $\sigma=\sigma_*$ is balanced spacing and then $\lambda_\sigma=1$.  We use $E_N$ for the searched boundary error of the construction under discussion.  Whenever a representation has $K$ retained rational functions and a polynomial of degree $n$, its counted LP dimension is $L=K+n+1$.

\section{Boundary-search specification and resolution}\label{sec:search}
\subsection{Arithmetic, construction, and search grids}
The recorded environment for the reported experiments is Python 3.13.5 with mpmath 1.3.0, NumPy 2.3.5, SciPy 1.17.0, and Matplotlib 3.10.8. Rational parameter strings are converted directly to multiprecision numbers. Explicit LP coefficients and objective values use 70 decimal digits; the overresolved exact-asymptotic tests in Section~\ref{sec:profiles} use 80. The least-squares solves use binary64 arithmetic, followed by 70-digit evaluation of their fixed coefficients. Search coordinates and the scalar optimizer use binary64 arithmetic. Increasing the evaluation precision does not change the precision of a preceding least-squares solve.

For the direct construction, $h$ and $T$ are as above, $M$ is the right-tail cutoff, $b_k$ is the $k$th quadrature weight, and $n_N$ is the polynomial degree; their precise formulas are (3.1), (3.2), and (3.7) in the main article. The ordinary hybrid construction uses (3.10) at the optimized threshold (3.12). In particular, with $\lambda=\lambda_\sigma$,
\begin{equation}\label{eq:counts}
 K=\left\lfloor N+\sqrt{2\alpha\lambda N}+\frac{\alpha\lambda}{2}+\delta\right\rfloor,
 \qquad n=\left\lceil\sqrt{2\alpha\lambda N}\right\rceil.
\end{equation}
Exact rational arithmetic is used for these floor and ceiling operations when the square root is rational, avoiding off-by-one errors at integer boundaries. For the overresolved constructions, (3.26) gives the right cutoff and the core-only polynomial degree; (3.28) gives the hybrid polynomial degree.

On the upper radial side write $z=r\e^{\ii\Theta}$, set $t=-\log r$, and hence $X=T-\alpha t$. For each case the radial search ends at
\begin{equation}\label{eq:tmax}
 t_{\max}=\sigma\sqrt N+
 \max\left\{32,\frac{28+m\log(2+\sigma\sqrt N)}{\alpha}\right\}.
\end{equation}
The radial grid is the union of a uniform $t$ grid, the endpoint $t_{\max}$, the 200 linear-radius points $r=j/200$ ($1\le j\le200$), and the phase-resolving points
\[
 t=\frac{T-\sqrt{vh}}{\alpha},\qquad
 v=\frac{j}{s},\quad 0\le j\le sN.
\]
Here $v=X^2/h$ is dimensionless, so this additional grid covers $X\ge0$; $s$ is the phase subdivision count in Table~\ref{tab:search-levels}. The upper arc is sampled uniformly in angle, and the vertex is evaluated separately. Conjugation symmetry supplies the lower boundary.
\begin{table}[!htbp]
 \centering\small
 \caption{Three search levels. The arc count is $J=\max\{J_0,\lceil q\max(\Theta,0.1)(d+1)/\pi\rceil+1\}$, where $J_0$ is the minimum arc count, $q$ is the degree-dependent oversampling factor, and $d$ is the prototype polynomial degree (before multiplication by the analytic-multiplier polynomial introduced in Section~\ref{sec:multiplier}). For the same-dimension comparison, both spaces use the same larger degree $d=L-N-1$.}
 \label{tab:search-levels}
 \begin{tabular}{@{}ccccc@{}}\toprule
 Level & Logarithmic step & $s$ & $J_0$ & $q$\\\midrule
 Coarse & $1/2$ & 4 & 81 & 2\\
 Production & $1/4$ & 8 & 161 & 4\\
 Fine & $1/8$ & 16 & 321 & 8\\\bottomrule
 \end{tabular}
\end{table}
\FloatBarrier
Every sampled local maximum exceeding $0.6$ times the largest sampled error, including the vertex in this screening value, is refined within its two neighboring sample locations. The implementation uses SciPy's bounded scalar minimizer, with \texttt{xatol}$=10^{-10}$ and at most 80 iterations, on the negative error divided by the sampled peak value. Endpoint values remain candidates. This procedure is a numerical search even when successive resolutions agree to many digits.

\subsection{A bound for the omitted small-radius disk}\label{sec:disk-bound}
The unsampled disk is not simply discarded. Put $\varepsilon=\e^{-t_{\max}}$; for all recorded cases it is smaller than every retained positive pole modulus. Since $t_{\max}>m/\alpha$,
\[
 \sup_{|z|\le\varepsilon,\ |\arg z|\le\Theta}
 |z^\alpha(\log z)^m|
 \le F_\varepsilon:=\e^{-\alpha t_{\max}}(t_{\max}+\Theta)^m.
\]
To state the bound independently of the particular representation, write the computed approximation as
\[
 R(z)=\sum_k\eta_k\phi_k(z)+p(z),
\]
where each $\phi_k$ is either $z/(z+a_k)$ or $a_k/(z+a_k)$ and all retained $a_k$ are positive.  For either rational basis function, the variation from its value at zero is at most $\varepsilon/(a_k-\varepsilon)$. Thus the rational contribution is bounded by
\[
 V_{\rm rat}=\sum_k |\eta_k|\frac{\varepsilon}{a_k-\varepsilon}.
\]
For $p(z)=\sum_j c_jz^j$, set $V_{\rm pol}=\sum_{j\ge1}|c_j|\varepsilon^j$. For the direct Taylor representation $p(z)=\sum_j c_j(z-1)^j$, use instead
\[
 V_{\rm pol}=\sum_{j\ge1}j|c_j|(1+\varepsilon)^{j-1}\varepsilon.
\]
Consequently the unsearched error is bounded by
\begin{equation}\label{eq:disk-bound}
 F_\varepsilon+|R(0)|+V_{\rm rat}+V_{\rm pol}.
\end{equation}
For the multiplier experiment, write $R=q_N R^0$, where $R^0$ denotes the already constructed singular-prototype approximation and $q_N$ is the Taylor polynomial of $\e^z$ specified in Section~\ref{sec:multiplier}. Here $V_{\rm rat}$ and $V_{\rm pol}$ are computed for $R^0$. Since $|q_N(z)|\le\e^\varepsilon$ and $|q_N(z)-1|\le\varepsilon\e^\varepsilon$, the corresponding bound is
\[
 \e^\varepsilon F_\varepsilon+|R^0(0)|+
 \e^\varepsilon(V_{\rm rat}+V_{\rm pol})+
 |R^0(0)|\varepsilon\e^\varepsilon.
\]
These inequalities control only the omitted disk. The numerical values of the bounds are evaluated in multiprecision arithmetic, without outward rounding; neither they nor the searches on the remaining boundary constitute an interval-certified supremum.

\subsection{Resolution check with all parameters specified}
The six resolution cases are listed in Table~\ref{tab:mesh-cases}; there are no implicit defaults. All use direct Taylor compression, $g=1$, and balanced spacing. Table~\ref{tab:mesh} reports their searches. The largest relative production-to-fine change is $\MeshMaxChange$.
\begin{table}[!htbp]
 \centering\small
 \caption{Parameters for the resolution study.}\label{tab:mesh-cases}
 \begin{tabular}{@{}ccccccc@{}}\toprule
 Case & $\alpha$ & $m$ & $\Theta/\pi$ & $\delta$ & $N$ & $\sigma/\sigma_*$\\\midrule
 1 & $1/2$ & 0 & $1/3$ & $3/4$ & 100 & 1\\
 2 & $1/2$ & 0 & 0 & $1/2$ & 100 & 1\\
 3 & $1/2$ & 0 & $3/5$ & $1/2$ & 100 & 1\\
 4 & $1/2$ & 0 & $3/4$ & $1/2$ & 100 & 1\\
 5 & $1/2$ & 0 & $5/6$ & $1/2$ & 100 & 1\\
 6 & $2/5$ & 3 & $1/3$ & $1/2$ & 196 & 1\\\bottomrule
 \end{tabular}
\end{table}
\FloatBarrier
\begin{table}[!htbp]
 \centering\small
 \caption{Refined boundary maxima at three resolutions. The last column is $|E^{(1)}/E^{(2)}-1|$.}\label{tab:mesh}
 \begin{tabular}{@{}cllll@{}}\toprule
 Case & Coarse $E^{(0)}$ & Production $E^{(1)}$ & Fine $E^{(2)}$ & Change\\
 \midrule\MeshTableRows\bottomrule
 \end{tabular}
\end{table}
\FloatBarrier

\subsection{Spacing and angle diagnostics}\label{sec:rate-diagnostics}
The main article keeps the tabulated angle experiment because it also records the polynomial degree and total dimension.  For completeness, Figure~\ref{fig:rates-supp} collects the corresponding rate-only diagnostics.  These fits verify the expected branch behavior over the tested finite values of $N$; they are not used to prove the convergence theorem or the exact profile constant.
\begin{figure}[!htbp]
 \centering
 \includegraphics[width=.47\linewidth]{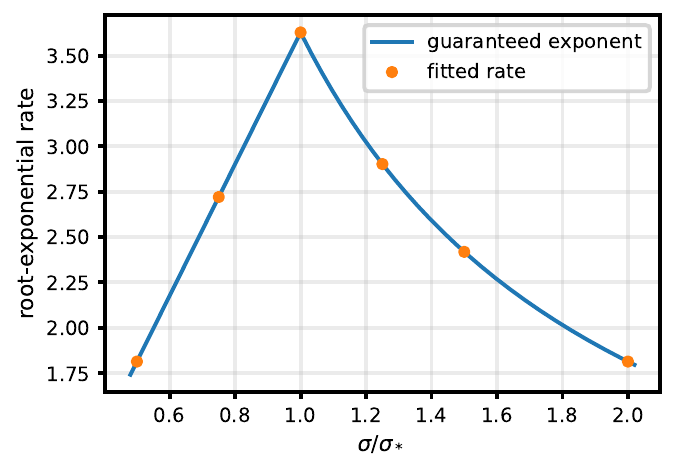}\hfill
 \includegraphics[width=.47\linewidth]{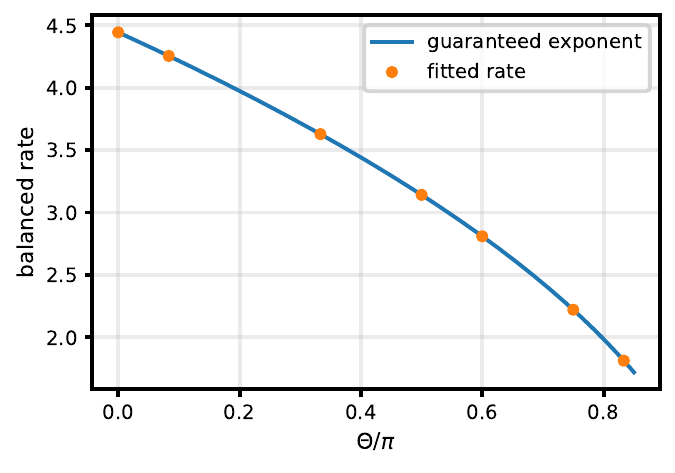}
 \caption{Root-exponential rate diagnostics. Left: fitted rates versus the two-branch guarantee for $\alpha=1/2$, $\Theta=\pi/3$. Right: the balanced angular law, including three reentrant sectors.}
 \label{fig:rates-supp}
\end{figure}
\FloatBarrier

\section{Dimension-control curves}\label{sec:curves}
For $\alpha=1/2$, $\delta=1/2$, balanced spacing, and the reentrant sectors $\Theta=3\pi/5,3\pi/4,5\pi/6$, Figure~\ref{fig:dimension} compares direct Taylor compression with the transition-pole construction at $N=36,64,100,144$. The paired methods have the same first $N$ tapered poles and nearly equal searched errors, but different representation dimensions. This is a comparison of two explicit realizations, not of best fits; the latter question is addressed separately in Section~\ref{sec:space-comparison}.
\begin{figure}[!htbp]
 \centering\includegraphics[width=.86\linewidth]{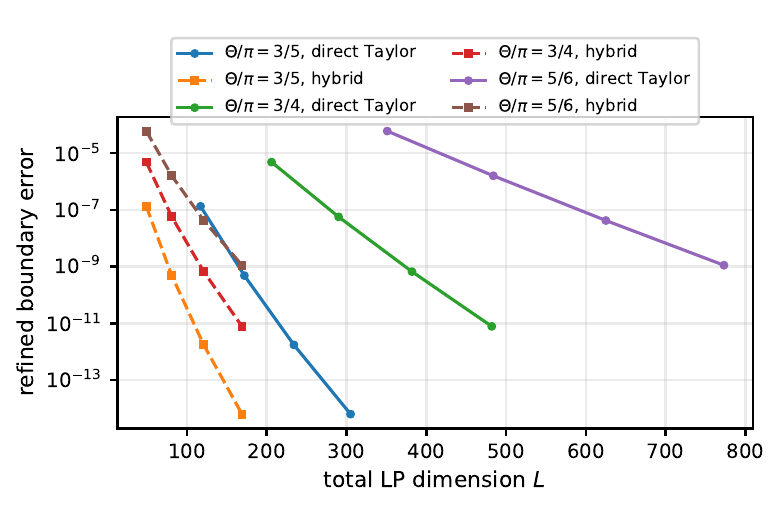}
 \caption{Wide-sector errors versus total LP dimension for direct Taylor compression and the optimized hybrid split. Parameters are $\alpha=1/2$, $\delta=1/2$, $\sigma=\sigma_*$, and $N=36,64,100,144$. The legend is outside the data region.}\label{fig:dimension}
\end{figure}
\FloatBarrier

\section{Fixed logarithmic powers and an analytic multiplier}\label{sec:logs}
For $\alpha=2/5$, $\Theta=\pi/3$, $\delta=1/2$, and $m=0,1,2,3$, Figure~\ref{fig:logpowers} shows all four core counts $N=64,100,144,196$. The main article reports the endpoint values at $N=196$. The balanced normalization includes $(1+T)^m$; the sparse normalization deliberately does not, in accordance with the branch-specific upper bounds.
\begin{figure}[!htbp]
 \centering
 \includegraphics[width=.47\linewidth]{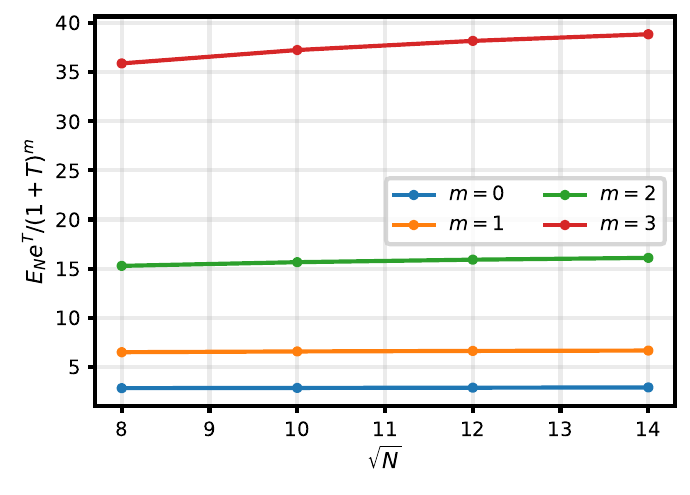}\hfill
 \includegraphics[width=.47\linewidth]{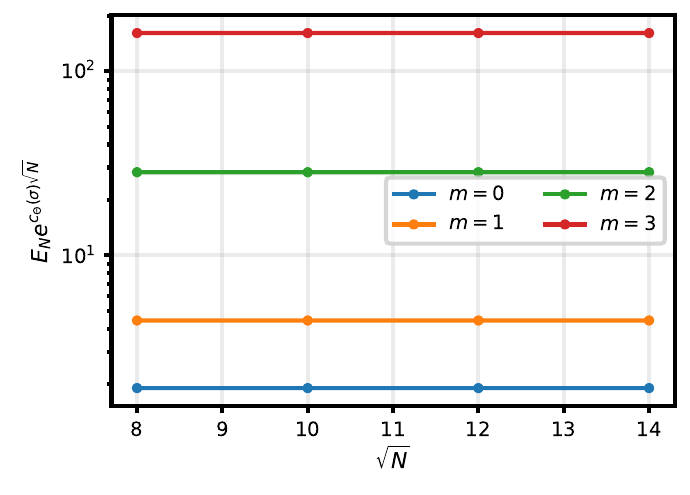}
 \caption{Logarithmic-power diagnostics for $\alpha=2/5$, $\Theta=\pi/3$, $\delta=1/2$, and $m=0,1,2,3$. Left: $E_N\e^T/(1+T)^m$ at balance. Right: $E_N\e^{c_\Theta(\sigma)\sqrt N}$ at $\sigma=3\sigma_*/2$. Each curve uses $N=64,100,144,196$.}\label{fig:logpowers}
\end{figure}
\FloatBarrier

\subsection{An exponential multiplier}\label{sec:multiplier}
Here $\alpha=2/5$, $m=2$, $\Theta=3\pi/5$, $\delta=1/2$, and $\sigma=\sigma_*$. We first form the direct singular-prototype LP $r^0_{N,\delta}$ in (3.16), and then multiply it by
\[
 q_N(z)=\sum_{j=0}^{d_N}\frac{z^j}{j!},\qquad
 d_N=\min\left\{d\ge0:\frac{\e}{(d+1)!}\le\e^{-T-20}\right\}.
\]
The displayed bound follows from the exponential-series remainder on $|z|\le1$. It makes multiplier approximation smaller than the target LP scale. The error searched in Table~\ref{tab:multiplier} is that of $q_Nr^0_{N,\delta}$ against $\e^zz^{2/5}(\log z)^2$, not that of an independently fitted product. By (3.18), the product has the same $N$ core poles and polynomial degree at most $n_N+d_N$. We count the prescribed representation as $L=N+n_N+d_N+1$.
\begin{table}[!htbp]
 \centering\small
 \caption{Analytic-multiplier experiment. The columns separate the prototype's direct Taylor degree $n_N$ from the multiplier degree $d_N$.}\label{tab:multiplier}
 \begin{tabular}{@{}crrrl@{}}\toprule
 $N$ & $n_N$ & $d_N$ & $L$ & Searched error\\\midrule
 \MultiplierTableRows\bottomrule
 \end{tabular}
\end{table}
\FloatBarrier

\section{Least-squares diagnostics and matched-space comparison}\label{sec:fitting}
\subsection{Training nodes and coefficient conventions}
For a space with $K$ rational functions, polynomial degree $n$, and total dimension $L=K+n+1$, let $P_{\rm tr}$ denote the total number of complex training nodes. The training set consists of upper-ray logarithmic samples with step $0.15$ up to $\sigma\sqrt N+24$ (including the first grid point past this value), 161 equally spaced upper-ray radii including zero and one, and $\max\{241,8L\}$ uniformly spaced upper-arc angles including both endpoints. Repeated nodes are retained, so these choices also specify the relative sampling weights. Both spaces in every matched pair use exactly the same nodes, in the same order.

The rational columns are $a_k/(z+a_k)$, where $-a_k<0$ is the $k$th retained pole and $a_k>0$ is its modulus. For the legacy fitted-space diagnostic they are followed by $1,z,\ldots,z^n$. Real and imaginary equations are stacked, and each column of the resulting real matrix is scaled to sampled Euclidean norm one. SciPy's \texttt{gelsd} least-squares driver uses a relative singular-value cutoff $10^{-13}$. Rational pole moduli are rounded to binary64 for matrix assembly; the computed coefficients are then held fixed and evaluated against the prescribed 70-digit pole moduli. This convention is the same for both spaces. The accompanying verification records store both sets of pole moduli and also check evaluation with the frozen assembly poles. The small-radius bound in Section~\ref{sec:disk-bound} also applies to these fixed fitted functions.

A coefficient norm below always means the Euclidean norm \emph{after undoing column normalization}, in the explicitly stated basis. The authors retain the column scales, or the corresponding scaled-coordinate norm where reported, as part of the underlying computational records. Norms in a monomial basis must not be compared directly with those in the Arnoldi basis introduced below.

\subsection{Re-fitting in the same hybrid space}
This reproduces the original cross-check, with $\alpha=1/2$, $m=0$, $\delta=1/2$, $N=64$, $\sigma=\sigma_*$, and $(K,n,L)=(72,8,81)$. Table~\ref{tab:hybrid-fit} shows modest improvements over the explicit coefficients. Its norms use the basis $\{a_k/(z+a_k)\}_{k=1}^{72}\cup\{z^j\}_{j=0}^{8}$, before column normalization. The large norms at the widest angles are therefore basis-dependent diagnostics, not invariant condition numbers or a stability theorem.
\begin{table}[!htbp]
 \centering\small
 \caption{Constructive and fitted errors in the same hybrid space. $P_{\rm tr}$ is the number of complex training nodes; the real system has $2P_{\rm tr}$ rows.}\label{tab:hybrid-fit}
 \begin{tabular}{@{}cllrlr@{}}\toprule
 $\Theta/\pi$ & Constructive error & Fitted error & Rank & $\norm{c}_2$ & $P_{\rm tr}$\\\midrule
 \HybridDiagnosticsRows\bottomrule
 \end{tabular}
\end{table}
\FloatBarrier

\subsection{A same-dimension comparison of two allocations}\label{sec:space-comparison}
The preceding experiment changes coefficients but not the space. We now compare two spaces at the same total dimension. In both, $\alpha=1/2$, $m=0$, $\delta=1/2$, and $\sigma=\sigma_*$. The conventional space keeps the first $N$ tapered poles and uses degree $L-N-1$. The hybrid space keeps $K>N$ poles and uses the smaller degree $n$ in \eqref{eq:counts}. For $N=64$ the two allocations are $(K,n)=(64,16)$ and $(72,8)$, both with $L=81$; for $N=100$ they are $(100,20)$ and $(110,10)$, both with $L=121$.

To avoid handicapping the conventional space through its larger monomial block, both polynomial blocks are generated by two-pass Arnoldi orthogonalization, following the principle of Vandermonde with Arnoldi~\cite{BNT21s}. On the complex upper-boundary samples, we use the real inner product
\[
 \langle u,v\rangle_{\mathbb R}=\operatorname{Re}\sum_{j=1}^{P_{\rm tr}}\overline{u_j}v_j.
\]
Starting with $\varphi_0=1/\sqrt{P_{\rm tr}}$, apply multiplication by the sample coordinates and two passes of modified Gram--Schmidt to generate $\varphi_1,\varphi_2,\ldots$. This produces real recurrence coefficients, hence real-coefficient polynomials. After the same column normalization and truncated SVD as above, the frozen recurrence is converted to monomial coefficients at 70 digits solely for evaluation. No additional fitting is performed in that conversion. The authors retain the recurrence, native fitted coefficients, singular values, and all training nodes as part of the underlying computational records.

Both members of a pair use identical production and fine search grids, including the common larger polynomial degree $d=L-N-1$ when choosing the arc grid. The largest production-to-fine relative change is $\SpaceBenchmarkMeshChange$. Re-evaluating the same fixed functions at 90 rather than 70 digits changes the error at the selected maximum by at most \BenchmarkEvaluationCheck\ relatively. These checks concern sampling and evaluation, not the accuracy of the original binary64 solves.
\begin{table}[!htbp]
 \centering\small
 \caption{Matched-space comparison. $E_{\rm core}$ and $E_{\rm hyb}$ use fitted coefficients in both spaces. The last column gives the two retained numerical ranks, in the same order.}\label{tab:benchmark}
 {\setlength{\tabcolsep}{3.5pt}\begin{tabular}{@{}ccclllc@{}}\toprule
 $\Theta/\pi$ & $N$ & $L$ & $E_{\rm core}$ & $E_{\rm hyb}$ & Ratio & Ranks\\\midrule
 \CompactBenchmarkRows\bottomrule
 \end{tabular}}
\end{table}
\FloatBarrier
\begin{table}[!htbp]
 \centering\small
 \caption{Diagnostics for Table~\ref{tab:benchmark}. The coefficient norm is in the native rational--Arnoldi basis, after undoing column scaling. The singular-value ratio uses the smallest \emph{retained} value and the largest value of the column-normalized real matrix.}\label{tab:benchmark-diagnostics}
 \begin{tabular}{@{}cccrll@{}}\toprule
 $\Theta/\pi$ & $N$ & Space & $P_{\rm tr}$ & $\norm{c}_2$ & $s_{\min,\rm ret}/s_{\max}$\\\midrule
 \BenchmarkDiagnosticRows\bottomrule
 \end{tabular}
\end{table}
\FloatBarrier
The core systems retain all $L$ singular directions; their error is not a consequence of truncating an ill-conditioned monomial block. The hybrid spaces have smaller searched errors in these four matched tests, despite some redundant directions being removed. This provides an allocation comparison beyond the conservative direct-Taylor construction. It does not establish a minimax dimension law, uniform superiority at other parameters, or a floating-point stability result.

\section{Stationary profiles beyond the midpoint square root}\label{sec:profiles}
We use two parameter families:
\[
 \mathrm A:\ (\alpha,\Theta,\delta)=(1/2,\pi/3,1/2),\qquad
 \mathrm B:\ (\alpha,\Theta,\delta)=(2/5,3\pi/5,3/4).
\]
Each is tested at $\sigma/\sigma_*=3/4$ and $1$, with $N=36,64,100$. Family A reproduces the main table; family B combines a nonsquare-root exponent, a nonmidpoint shift, and a reentrant sector. The finite functions use the overresolved hybrid realization of Theorem~1.2, including both $2\log(1+T)$ margins. Their errors are evaluated at 80 digits, independently of the profile searches.

\subsection{Profile search and the scope of its tail bound}
For each family and spacing, the two-dimensional profile is searched on $[-10,10]\times[0,\Theta]$ and on $[-20,20]\times[0,\Theta]$. The exploratory search uses SciPy's differential evolution with seed 7, population multiplier 10, at most 300 iterations, relative tolerance $10^{-9}$, and polishing. Each angular side is also sampled at 321 uniformly spaced $X$ values. Side-local peaks reaching $0.8$ of that side's sampled maximum are refined with bounded scalar minimization (\texttt{xatol}$=10^{-12}$, at most 100 iterations). Window endpoints and the two-dimensional candidate are retained. The six largest candidate values from this exploratory stage are re-evaluated at 80 digits. All selected maxima in these four experiments lie on $\theta=\Theta$; this is an observed feature, not an assumption imposed on the two-dimensional search.

Set $c_\alpha=\sin(\pi\alpha)/(\pi\alpha)$. The profile series is truncated at $J_{\rm prof}=\lceil 100^2/h+\delta\rceil$. Let $\mathcal H^{(J_{\rm prof})}_{h,\delta}$ denote the function obtained from the profile (1.7) by retaining only the terms $1\le k\le J_{\rm prof}$ in its $k$-series. Put $\kappa=\alpha/(1-\alpha)$ and $w_{J_{\rm prof}+1}=\sqrt{(J_{\rm prof}+1-\delta)h}$. Sector separation and a decreasing sum--integral comparison give the explicit bound
\begin{equation}\label{eq:profile-tail}
 \left|\mathcal H_{h,\delta}(X,\theta)-\mathcal H^{(J_{\rm prof})}_{h,\delta}(X,\theta)\right|
 \le \frac{|c_\alpha|\e^{X/\alpha-w_{J_{\rm prof}+1}/\kappa}}{\cos(|\theta|/2)}
 \left(\frac{h}{2w_{J_{\rm prof}+1}}+\kappa\right).
\end{equation}
The selected candidates are also evaluated at 90 digits with cutoffs 100 and 120. This checks series truncation and arithmetic at fixed points only. In particular, \eqref{eq:profile-tail} does not control the part of the $X$ axis outside either search window. No decay of the balanced profile as $X\to+\infty$ is assumed.
\begin{table}[!htbp]
 \centering\small
 \caption{Profile search results. $\widehat C$ is a numerical finite-window maximum and $\widehat X$ its selected coordinate. The last two columns are the relative change between windows and the bound \eqref{eq:profile-tail} at the selected point for cutoff 100. Neither is a global-supremum certificate.}\label{tab:profile-summary}
 \begin{tabular}{@{}cccccl@{}}\toprule
 Family & $\sigma/\sigma_*$ & $\widehat C$ & $\widehat X$ & Window change & Series-tail bound\\\midrule
 \ProfileSummaryRows\bottomrule
 \end{tabular}
\end{table}
\FloatBarrier

\subsection{Finite-sequence dimensions and scaled locations}
Table~\ref{tab:profile-finite} lists the actual counts and the coordinate $X_N=T-\alpha t_N$ of the selected boundary maximum, where $z_N$ denotes that boundary maximizer and $t_N=-\log|z_N|$. The order-one values of $X_N$ explain why the stationary profile can already be visible for modest $N$: the exact identity (3.21) has removed $T$, and the searched peak is already inside $X\le T$. This observation does not establish that the global profile supremum is attained at a finite $X$.
\begin{table}[!htbp]
 \centering\small
 \caption{Overresolved finite hybrid sequences. Counts include all prescribed basis functions: $L=K+n+1$. The normalized error is $\e^T E_N$, not $\e^{\alpha\sigma\sqrt L}E_N$.}\label{tab:profile-finite}
 {\setlength{\tabcolsep}{4pt}\begin{tabular}{@{}cccccccr@{}}\toprule
 Family & $\sigma/\sigma_*$ & $N$ & $K$ & $n$ & $L$ & $\e^T E_N$ & $X_N$\\\midrule
 \ProfileFiniteRows\bottomrule
 \end{tabular}}
\end{table}
\FloatBarrier
For family B at $N=36$, both spacings are also checked on the fine boundary grid; the individual changes are retained in the underlying computational records.  The profile series and the finite LP are evaluated by different formulas in the verification implementation: the first evaluates the $T$-free series directly, whereas the second constructs the finite hybrid LP before evaluating its boundary error.  They share the same mathematical parameters and are therefore a consistency cross-check rather than two unrelated software stacks.

The finite-realization discrepancy can be checked at a fixed profile point.
For each family and spacing, use the selected profile coordinate
$(\widehat X,\widehat\theta)$, with $\widehat\theta=\Theta$ in these four cases, and set
\[
 \widehat z_N=\exp\!\left((\widehat X-T)/\alpha+\ii\widehat\theta\right),
 \qquad
 E_N^{\rm pt}=\left|\widehat z_N^\alpha-
 \widetilde r^{\rm hyb}_{N,\delta}(\widehat z_N)\right|.
\]
Here $T=\alpha\sigma\sqrt N$ and $\widetilde r^{\rm hyb}_{N,\delta}$ is the
finite overresolved hybrid from Theorem~1.2. Write
$\widehat C=|\mathcal H_{h,\delta}(\widehat X,\widehat\theta)|$ for the
profile value at that same point. Table~\ref{tab:profile-discrepancy} reports
$|\e^T E_N^{\rm pt}-\widehat C|/\widehat C$ from the saved 80-digit values.
The coordinate $(\widehat X,\widehat\theta)$ is held fixed as $N$ varies.
Thus this table checks finite realization at a common profile point; it does
not compare independently searched maxima or quantify the error in the global
profile supremum. The searched errors $E_N$ in Table~\ref{tab:profile-finite}
retain their separate meaning.
\begin{table}[!htbp]
 \centering\small
 \caption{Fixed-point relative discrepancy $|\e^T E_N^{\rm pt}-\widehat C|/\widehat C$ from the saved 80-digit records. Both values use the same selected profile coordinate; these are pointwise finite-realization checks, not discrepancies between searched maxima.}\label{tab:profile-discrepancy}
 \begin{tabular}{@{}ccccc@{}}\toprule
 Family & $\sigma/\sigma_*$ & $N=36$ & $N=64$ & $N=100$\\\midrule
 A & $3/4$ & $4.92\times10^{-17}$ & $5.42\times10^{-22}$ & $6.82\times10^{-27}$\\
 A & $1$   & $7.10\times10^{-22}$ & $1.86\times10^{-28}$ & $6.21\times10^{-35}$\\
 B & $3/4$ & $6.48\times10^{-15}$ & $3.18\times10^{-19}$ & $1.63\times10^{-23}$\\
 B & $1$   & $5.27\times10^{-19}$ & $1.06\times10^{-24}$ & $2.40\times10^{-30}$\\\bottomrule
 \end{tabular}
\end{table}
\FloatBarrier
These pointwise checks support the finite-realization step; they neither
replace Theorem~1.2 nor certify its global constant.

\section{Numerical provenance and scope}\label{sec:archive}
The numerical macros and figures accompanying the article preserve the reported
experiments. Their construction and search specifications are given above. The
verification uses 80-digit decimal arithmetic for selected profile and
finite-construction values. The matched-space checks use binary64 solves and
extended-precision evaluation; they also compare SVD cutoffs and the two
pole-evaluation conventions. These checks support the displayed results within
their documented scope, without certifying continuous suprema or replacing the
proofs.\smallskip

\textit{Declarations.} The AI-use declaration in the main article applies equally to this supplementary material. The authors assume responsibility for all content.